 \def\NZQ{\mathbb}               
 \def\NN{{\NZQ N}}
 \def\ZZ{{\NZQ Z}}
 \def\G{{\mathcal G}}
 \def\opn#1#2{\def#1{\operatorname{#2}}} 
 \opn\chara{char} \opn\length{\ell} \opn\pd{pd} \opn\rk{rk}
 \opn\projdim{proj\,dim} \opn\injdim{inj\,dim} \opn\rank{rank}
 \opn\depth{depth} \opn\grade{grade} \opn\height{height}
 \opn\embdim{emb\,dim} \opn\codim{codim}
 \opn\Tr{Tr} \opn\bigrank{big\,rank}
 \opn\superheight{superheight}\opn\lcm{lcm}
 \opn\trdeg{tr\,deg}
 \opn\reg{reg} \opn\lreg{lreg} \opn\ini{in} \opn\lpd{lpd}
 \opn\size{size} \opn\sdepth{sdepth}
 \opn\link{link}\opn\fdepth{fdepth}\opn\lex{lex}
 \opn\div{div} \opn\Div{Div} \opn\cl{cl} \opn\Cl{Cl}
 \opn\Spec{Spec} \opn\Supp{Supp} \opn\supp{supp} \opn\Sing{Sing}
 \opn\Ass{Ass} \opn\Min{Min}\opn\Mon{Mon}
 \opn\Ann{Ann} \opn\Rad{Rad} \opn\Soc{Soc}
 \opn\Im{Im} \opn\Ker{Ker} \opn\Coker{Coker} \opn\Am{Am}
 \opn\Hom{Hom} \opn\Tor{Tor} \opn\Ext{Ext} \opn\End{End}
 \opn\Aut{Aut} \opn\id{id}
 \opn\nat{nat}
 \opn\pff{pf}
 \opn\Pf{Pf} \opn\GL{GL} \opn\SL{SL} \opn\mod{mod} \opn\ord{ord}
 \opn\Gin{Gin} \opn\Hilb{Hilb}\opn\sort{sort}
 \opn\aff{aff} \opn
\opn\relint{relint} \opn\st{st}
 \opn\lk{lk} \opn\cn{cn} \opn\core{core} \opn\vol{vol}  \opn\inp{inp} \opn\nilpot{nilpot}
 \opn\link{link} \opn\star{star}\opn\lex{lex}\opn\set{set}
 \opn\width{wd}
 \opn\ecart{ecart}
 \opn\gr{gr}
 \def\pot#1#2{#1[\kern-0.28ex[#2]\kern-0.28ex]}
 \opn\dirlim{\underrightarrow{\lim}}
 \opn\inivlim{\underleftarrow{\lim}}
 \let\to=\rightarrow
 \def\Implies{\ifmmode\Longrightarrow \else
         \unskip${}\Longrightarrow{}$\ignorespaces\fi}
 \def\implies{\ifmmode\Rightarrow \else
         \unskip${}\Rightarrow{}$\ignorespaces\fi}
 \def\iff{\ifmmode\Longleftrightarrow \else
         \unskip${}\Longleftrightarrow{}$\ignorespaces\fi}
 \def\Soc{{\mathbf Soc}}
 \def\opn#1#2{\def#1{\operatorname{#2}}} 
 \opn\chara{char} \opn\length{\ell} \opn\pd{pd} \opn\rk{rk}
 \opn\projdim{proj\,dim} \opn\injdim{inj\,dim} \opn\rank{rank}
 \opn\depth{depth} \opn\grade{grade} \opn\height{height}
 \opn\bigheight{bigheight}
 \opn\embdim{emb\,dim} \opn\codim{codim}
 \opn\superheight{superheight}\opn\lcm{lcm}
 \opn\trdeg{tr\,deg}
 \opn\reg{reg} \opn\lreg{lreg} \opn\ini{in} \opn\lpd{lpd}
 \opn\size{size} \opn\sdepth{sdepth}
 \opn\link{link}\opn\fdepth{fdepth}\opn\lex{lex}
 \opn\type{type}
 \opn\gap{gap}
 \opn\arithdeg{arith-deg}
 \opn\Deg{Deg}
 \opn\sat{sat}
 \opn\mat{mat}
 \opn\Mat{Mat}
 \opn\div{div} \opn\Div{Div} \opn\cl{cl} \opn\Cl{Cl}
 \opn\Spec{Spec} \opn\Supp{Supp} \opn\supp{supp} \opn\Sing{Sing}
 \opn\Ass{Ass} \opn\Min{Min}\opn\Mon{Mon} \opn\Max{Max}
 \opn\Ann{Ann} \opn\Rad{Rad} \opn\Soc{Soc}
 \opn\Im{Im} \opn\Ker{Ker} \opn\Coker{Coker} \opn\Am{Am}
 \opn\Hom{Hom} \opn\Tor{Tor} \opn\Ext{Ext} \opn\End{End}
 \opn\Aut{Aut} \opn\id{id}
 \opn\nat{nat}
 \opn\pff{pf}
 \opn\Pf{Pf} \opn\GL{GL} \opn\SL{SL} \opn\mod{mod} \opn\ord{ord}
 \opn\Gin{Gin} \opn\Hilb{Hilb}\opn\sort{sort}
 \opn\PF{PF}\opn\Ap{Ap}
 \opn\mult{mult}
 \opn\bight{bight}
 \opn\aff{aff}
 \opn\relint{relint} \opn\st{st}
 \opn\lk{lk} \opn\cn{cn} \opn\core{core} \opn\vol{vol}  \opn\inp{inp} \opn\nilpot{nilpot}
 \opn\link{link} \opn\star{star}\opn\lex{lex}\opn\set{set}
 \opn\width{wd}
 \opn\Fr{F}
 \opn\QF{QF}
 \opn\G{G}
 \opn\type{type}\opn\res{res}
 \opn\conv{conv}
 \opn\Shad{Shad}
 \opn\gr{gr}
 \def\pot#1#2{#1[\kern-0.28ex[#2]\kern-0.28ex]}
 \opn\dirlim{\underrightarrow{\lim}}
 \opn\inivlim{\underleftarrow{\lim}}
 \let\to=\rightarrow
 \def\Implies{\ifmmode\Longrightarrow \else
         \unskip${}\Longrightarrow{}$\ignorespaces\fi}
 \def\implies{\ifmmode\Rightarrow \else
         \unskip${}\Rightarrow{}$\ignorespaces\fi}
 \def\iff{\ifmmode\Longleftrightarrow \else
         \unskip${}\Longleftrightarrow{}$\ignorespaces\fi}
\theoremstyle{plain}
\newtheorem{theorem}{Theorem}[section]
\newtheorem{Theorem}[theorem]{Theorem}
\newtheorem{thm}[theorem]{Theorem}
\newtheorem{Proposition}[theorem]{Proposition}
\newtheorem{Corollary}[theorem]{Corollary}
\newtheorem{cor}[theorem]{Corollary}
\newtheorem{Lemma}[theorem]{Lemma}
\newtheorem{claim}{Claim}
\theoremstyle{definition}
\newtheorem{Definition}[theorem]{Definition}
\newtheorem{ex}[theorem]{Example}
\newtheorem{quest}[theorem]{Questions}
\newtheorem{rem}[theorem]{Remark}
\newtheorem{prob}[theorem]{Problem}
\newtheorem{Fact}[theorem]{Fact}
\newtheorem*{acknowledgments}{Acknowledgments}
\newtheorem*{condition}{Conditions}
 \let\epsilon\varepsilon
 \let\kappa=\varkappa
 \def\qed{\ifhmode\textqed\fi
       \ifmmode\ifinner\quad\qedsymbol\else\dispqed\fi\fi}
 \def\textqed{\unskip\nobreak\penalty50
        \hskip2em\hbox{}\nobreak\hfil\qedsymbol
        \parfillskip=0pt \finalhyphendemerits=0}
 \def\dispqed{\rlap{\qquad\qedsymbol}}
 \opn\dis{dis}
 \def\pnt{{\raise0.5mm\hbox{\large\bf.}}}
 \opn\Lex{Lex}
\newcommand{\rme}{\mathrm{e}}
\newcommand{\rmr}{\mathrm{r}}
\newcommand{\rmF}{\mathrm{F}}
\newcommand{\rmQ}{\mathrm{Q}}
\newcommand{\fkm}{\mathfrak{m}}
\newcommand{\fkn}{\mathfrak{n}}
\newcommand{\fkp}{\mathfrak{p}}
\newcommand{\fkM}{\mathfrak{M}}
\def\ol{\overline}
\def\tr{\mathrm{tr}}
\title{The tiny trace ideals of the canonical modules in Cohen-Macaulay rings of dimension one}
\author{J\"{u}rgen Herzog}
\address{J\"urgen Herzog: Fachbereich Mathematik, Universit\"at Duisburg-Essen, Fakult\"at f\"ur Mathematik, 45117 Essen, Germany}
\email{juergen.herzog@uni-essen.de}
\author{Shinya Kumashiro}
\address{Shinya Kumashiro: National Institute of Technology (KOSEN), Oyama College
771 Nakakuki, Oyama, Tochigi, 323-0806, Japan}
\email{skumashiro@oyama-ct.ac.jp}
\author{Dumitru I. Stamate}
\address{Dumitru I. Stamate: Faculty of Mathematics and computer science, University of Bucharest, Str. Academiei 14, Bucharest - 010014, Romania}
\email{dumitru.stamate@fmi.unibuc.ro}
\thanks{2020 {\em Mathematics Subject Classification.} 13H10}
\thanks{{\em Key words and phrases.} trace ideal, canonical module, Cohen-Macaulay ring, Gorenstein ring}
\thanks{The second author was supported by JSPS KAKENHI Grant Number 21K13766.}
\begin{document}

\begin{abstract}
We study one-dimensional Cohen-Macaulay rings whose trace ideal of the canonical module is as small as possible. In this paper we call such rings far-flung Gorenstein rings. 
We investigate far-flung Gorenstein rings in relation with the endomorphism algebras of the maximal ideals and numerical semigroup rings.
We show that the solution of the Rohrbach problem in additive number theory provides an upper bound for the multiplicity of far-flung Gorenstein numerical semigroup rings. 
Reflexive modules over far-flung Gorenstein rings are also studied.
\end{abstract}

\maketitle



\section{Introduction}\label{section1}

Let $R$ be a Noetherian ring, and let $M$ be a finitely generated $R$-module. Then
\begin{align*} 
\mathrm{tr}_R(M)=\sum_{f\in \Hom_R(M, R)} \Im f = \Im (\mathrm{ev})
\end{align*}
where $\mathrm{ev}:\Hom_R(M, R) \otimes_R M \to R$; $\ f\otimes x\mapsto f(x)$ ($f\in \Hom_R(M, R)$ and $x\in M$) denotes the evaluation map, is called the {\it trace ideal} of $M$. An ideal $I$ of $R$ is called a {\it trace ideal} if $I=\mathrm{tr}_R(M)$ for some $R$-module $M$. 

The notion of trace ideals was recently studied by several papers (\cite{DMP, GIK2, HHS, K, Lin, LP}). In particular, Herzog, Hibi, Stamate deeply studied the trace ideal of the canonical module, and they introduced a new notion, namely, {\it nearly Gorenstein rings} as a class of non-Gorenstein Cohen-Macaulay rings (\cite{HHS}). Here, recall that the trace ideal of the canonical module defines the non-Gorenstein locus of the ring (\cite[after 11.41. Lemma]{LW} and \cite[Lemma 2.1]{HHS}). Thus, we may suppose that the ring is close to being Gorenstein if the trace ideal of the canonical module is large. Indeed, the notion of nearly Gorenstein rings is defined by the inclusion $\tr_R(\omega_R)\supseteq \fkm$ for a Cohen-Macaulay local ring $(R, \fkm)$ possessing the canonical module $\omega_R$ of $R$.

In this paper, we study the opposite case in some sense, that is, the case where the trace ideal of the canonical module is as small as possible. It is known that for a one-dimensional generically Gorenstein Cohen-Macaulay local ring $(R, \fkm)$ possessing the canonical module $\omega_R$, $\tr_R(\omega_R)\supseteq R:\ol{R}$, where $\ol{R}$ denotes the integral closure of $R$. Thus, we call $R$ a {\it far-flung Gorenstein ring} if $\tr_R(\omega_R)=R:\ol{R}$. From the definition, it seems that there is only little that we can expect for far-flung Gorenstein rings. However, we will obtain that far-flung Gorenstein rings enjoy interesting properties. 

To illustrate our results, let $(R, \fkm)$ be a (one-dimensional) far-flung Gorenstein ring. For simplicity, suppose that $R/\fkm$ is infinite and $\ol{R}$ is a local ring. Then, we obtain the bounds 
\[
\rmr(R)+1\le \rme(R) \le \binom{r+1}{2}
\]
for the multiplicity of $R$, where $\rme(R)$ denotes the multiplicity and $r=\rmr(R)$ denotes the Cohen-Macaulay type of $R$ (Corollary \ref{0.11}).  Furthermore, the upper bound can be improved if $R$ is a numerical semigroup ring. In this case, we will see that $\rme(R)\le \ol{n}(r)$, where the integer $\ol{n}(r)$ denotes the solution of the Rohrbach problem for $r$ (Corollary \ref{cor5.3}). Note that the Rohrbach problem is a long-standing problem in additive number theory, see \cite{Ro, Slo}.

We also obtain that if $(R, \fkm)$ is a far-flung Gorenstein ring,  the endomorphism algebra $\Hom_R(\fkm, \fkm)$ of the maximal ideal is again far-flung Gorenstein  (Theorem \ref{b3.2}) and $\Hom_R(\omega_R, M)$ is $\ol{R}$-free for all reflexive modules $M$ of positive rank (Theorem \ref{thm4.1}). Examples arising from numerical semigroup rings are also explored.

The remainder of this paper is organized as follows. In Section \ref{section2} we give a characterization of far-flung Gorenstein rings and prove the bounds $\rmr(R)+1\le \rme(R) \le \binom{r+1}{2}$. In Section 3 we prove Theorem 3.2. 
In Section \ref{section5} we study reflexive modules over far-flung Gorenstein rings. In Section \ref{section5.5} we revisit the bound for the multiplicity of far-flung Gorenstein numerical semigroup rings in relation with the Rohrbach problem. In Section \ref{section6} we study in more details numerical semigroup rings.

Let us fix our notation throughout this paper. In what follows, $(R, \fkm)$ is a Cohen-Macaulay local ring and $M$ is a finitely generated $R$-module. Then 
$\ell_R(M)$, $\mu_R(M)$, and $\rme(M)$ denote the length, multiplicity, and the number of minimal generators of $M$, respectively. $\rmr(R)$ and $v(R)$ denote the Cohen-Macaulay type and embedding dimension of $R$, respectively.

Let $\rmQ(R)$ denote the total ring of fractions of $R$, and let $\ol{R}$ denote the integral closure of $R$. Then a finitely generated $R$-submodule of $\rmQ(R)$ containing a non-zerodivisor of $R$ is called a {\it fractional ideal}. For two fractional ideals $I$ and $J$, $I:J$ denotes the colon ideal of $I$ and $J$ which is given by the set $\{\alpha\in \rmQ(R) \mid \alpha J\subseteq I\}$  (see \cite{HK}).

\begin{acknowledgments}
We would like to thank Mihai Cipu for telling us about the Rohrbach problem.
\end{acknowledgments}


\section{Far-flung Gorenstein rings and bounds of the multiplicity}\label{section2}

Throughout this paper, unless otherwise noted, let $(R, \fkm)$ be a Cohen-Macaulay local ring of dimension one, possessing the canonical module $\omega_R$. 
We set the conditions on $R$ as follows.
\begin{condition} 
\begin{enumerate}[{\rm (a)}] 
\item There exists an $R$-submodule $C$ of $\rmQ(R)$ such that $R\subseteq C\subseteq \ol{R}$ and $C\cong \omega_R$.
\item $\ol{R}$ is finitely generated as an $R$-module.
\item $\ol{R}$ is a local ring with the maximal ideal $\fkn$.
\item There exists a non-zerodivisor $a\in \fkm$ such that $(a)$ is a reduction of $\fkm$.
\end{enumerate}
\end{condition}

\begin{rem} \label{a2.1}
\begin{enumerate}[{\rm (i)}] 
\item If the ring $R$ is generically Gorenstein (i.e., $R_\fkp$ is Gorenstein for all $\fkp \in \Ass R$) and the residue field $R/\fkm$ is infinite, then $R$ satisfies the conditions (a) and (d). Indeed, there exist a canonical ideal $\omega\subsetneq R$ and its reduction $(a)\subseteq \omega$. Hence $C=\frac{\omega}{a}$ is the module satisfying the condition (a).
\item If $R$ is a numerical semigroup ring $K[|H|]=K[|t^h : h\in H|]$, where $K[|t|]$ is formal power series ring over a field $K$ and $H$ is a subsemigroup of $\mathbb{N}_0=\{0, 1, 2, \dots\}$, then $R$ satisfies the conditions (a)-(d) (see Section \ref{section6}). 
\item The condition (d) implies that $\fkm \ol{R}=a\ol{R}$. Indeed, we have $(a)\subseteq \fkm \subseteq \ol{(a)}=a\ol{R}\cap R\subseteq a\ol{R}$, where $\ol{(a)}$ denotes the integral closure of ideal $(a)$ in the sense of \cite[page 2]{SH}. It follows that $R\subseteq \frac{\fkm}{a}\subseteq \ol{R}$; hence, $\frac{\fkm}{a}\ol{R}=\ol{R}$. 
\item For all fractional ideals $I$ and $J$, 
\[
I:J\cong \Hom_R(J, I)
\] 
(\cite[Lemma 2.1]{HK}). In particular, with the condition (a), we have $C:(C:I)=I$ (see \cite[Definition 2.4]{HK}).
\end{enumerate} 
\end{rem}

Under condition (a), we can regard the evaluation map
\begin{align*} 
\mathrm{ev}: \Hom_R(\omega_R, R)\otimes_R \omega_R \to R 
\end{align*}
as the multiplication map $(R:C) \otimes_R C \to R$; $f\otimes x \to fx$ ($f\in R:C$ and $x\in C$). It follows that $\tr_R(\omega_R)=(R:C)C$.
The following is a starting point of this paper.

\begin{Lemma}\label{a2.2} {\rm (cf. \cite[Proposition A.1]{HHS2})}
With the conditions {\rm (a)} and {\rm (b)}, we have $R:\ol{R}\subseteq \tr_R(\omega_R)$.
\end{Lemma}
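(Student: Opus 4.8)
The plan is to reduce everything to the identity $\tr_R(\omega_R)=(R:C)C$ established just before the lemma, and then to read off the inclusion from the two containments $R\subseteq C\subseteq\ol R$ supplied by condition (a). Since these containments are the only structural input, I expect the argument to be short; the substance of the statement lies in the identification of the trace with the product ideal, not in the inclusion itself.

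First I would use the order-reversing behaviour of the colon in its second argument. From $C\subseteq\ol R$ it follows at once that
\[
R:\ol R\;\subseteq\;R:C,
\]
because any $\alpha\in\rmQ(R)$ with $\alpha\ol R\subseteq R$ satisfies in particular $\alpha C\subseteq\alpha\ol R\subseteq R$, i.e.\ $\alpha\in R:C$. Next I would invoke the other containment $R\subseteq C$, equivalently $1\in C$. For the product of fractional ideals this gives
\[
(R:C)\,C\;\supseteq\;(R:C)\,R\;=\;R:C,
\]
so that $R:C\subseteq(R:C)C$. Chaining the two displays yields $R:\ol R\subseteq(R:C)C$, and since $(R:C)C=\tr_R(\omega_R)$ by the computation preceding the lemma, this is exactly the desired conclusion.

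The only point that requires care is bookkeeping rather than a real difficulty. Condition (b) is what makes $\ol R$ a fractional ideal---finitely generated and containing the nonzerodivisor $1$---so that $R:\ol R$ is a bona fide fractional ideal in the sense of the introduction, and in fact a nonzero one; this is also what keeps the statement from being vacuous. The identity $\tr_R(\omega_R)=(R:C)C$, on the other hand, depends only on condition (a), through the identification of the evaluation map $\Hom_R(\omega_R,R)\otimes_R\omega_R\to R$ with the multiplication map $(R:C)\otimes_R C\to R$, whose image is precisely the product ideal $(R:C)C$. Granting these two facts, the inclusion is a formal consequence of $R\subseteq C\subseteq\ol R$, and I anticipate no genuine obstacle beyond keeping track of these identifications.
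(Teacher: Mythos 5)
Your proof is correct and is essentially the paper's own argument: the paper's entire proof is the chain $\tr_R(\omega_R)=(R:C)C\supseteq R:C\supseteq R:\ol{R}$, whose two inclusions are exactly your observations that $1\in C$ gives $(R:C)C\supseteq R:C$ and that $C\subseteq\ol{R}$ reverses to $R:\ol{R}\subseteq R:C$. You have merely spelled out the justifications the paper leaves implicit.
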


\begin{proof}
This follows from the fact that $\tr_R(\omega_R)=(R:C)C\supseteq R:C\supseteq R:\ol{R}$.
\end{proof}

Since the trace ideal of the canonical module defines the non-Gorenstein locus of the ring (\cite[after 11.41. Lemma]{LW}, \cite[Lemma 2.1]{HHS}), we may suppose that the ring is close to being Gorenstein if the trace ideal of the canonical module is large. In this paper, we study the opposite case, that is, the case where the trace ideal of the canonical module is as small as possible:

\begin{Definition}\label{defffg}
Suppose that $R$ satisfies the conditions {\rm (a)} and {\rm (b)}.
We say that $R$ is a {\it far-flung Gorenstein ring} if $\tr_R(\omega_R)=R:\ol{R}$.
\end{Definition}

\begin{rem} 
Recall that for an arbitrary Cohen-Macaulay local ring $(R, \fkm)$ possessing the canonical module $\omega_R$, we say that $R$ is a {\it nearly Gorenstein ring} if $\tr_R(\omega_R)\supseteq \fkm$ (\cite{HHS}). Hence, if a one-dimensional Cohen-Macaulay local ring $R$ satisfies the conditions {\rm (a)} and {\rm (b)}, $R$ is nearly Gorenstein and far-flung Gorenstein if and only if $R:\ol{R}\supseteq \fkm$. This is equivalent to saying that $\ol{R}= \fkm:\fkm (\cong \Hom_R(\fkm, \fkm))$. Indeed, we may assume that $R$ is not a discrete valuation ring. We then obtain that 
\[
R:\ol{R}\supseteq \fkm \Leftrightarrow \fkm \ol{R}\subseteq R \Leftrightarrow \ol{R}\subseteq R:\fkm=\fkm :\fkm,
\]
where the equality $R:\fkm=\fkm :\fkm$ is well understood and described in Lemma \ref{b3.2}.

In particular, all nearly Gorenstein far-flung Gorenstein rings are almost Gorenstein rings in the sense of \cite[Definition 3.1]{GMP} and have minimal multiplicity (see \cite[Theorem 5.1]{GMP}).
Furthermore, if $R=K[|H|]$ is a numerical semigroup ring, then the condition $R:\ol{R}\supseteq \fkm$ is equivalent to saying that $H=\left<n, n+1, \dots, 2n-1\right>$ for some $n>0$. 
\end{rem}

The following is a characterization of far-flung Gorenstein rings.

\begin{thm} \label{0.8}
Suppose that $R$ satisfies the conditions {\rm (a)-(c)}.
Then the following are equivalent:
\begin{enumerate}[{\rm (i)}] 
\item $R$ is a far-flung Gorenstein ring;
\item $\tr_R(\omega_R)\cong \ol{R}$;
\item $C^2=\ol{R}$.
\end{enumerate} 
\end{thm}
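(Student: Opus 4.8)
The plan is to linearise the multiplicative trace ideal via the canonical duality and to reduce all three conditions to a single equality of colon ideals. Throughout set $\fkc=R:\ol{R}$ for the conductor, and recall from the discussion preceding Lemma~\ref{a2.2} that $\tr_R(\omega_R)=(R:C)C$. First I would record three elementary identities. Taking $I=R$ in the involution $C:(C:I)=I$ of Remark~\ref{a2.1}(iv) and using $C:R=C$ gives $C:C=R$. Since $R\subseteq C\subseteq\ol{R}$ and $\ol{R}$ is a ring, $C\ol{R}=\ol{R}$. Combining these with the standard colon identity $(I:J):K=I:(JK)$ yields
\begin{align*}
R:C=(C:C):C=C:C^{2}\qquad\text{and}\qquad \fkc=R:\ol{R}=(C:C):\ol{R}=C:(C\ol{R})=C:\ol{R}.
\end{align*}
Thus, under the involution $C:(-)$, the inclusion $C^{2}\subseteq\ol{R}$ dualises to $R:C\supseteq\fkc$, and the three distinguished ideals are linked by $\fkc\subseteq R:C\subseteq(R:C)C=\tr_R(\omega_R)$, where the first inclusion uses $C\subseteq\ol{R}$ and the last uses $R\subseteq C$. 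Note also that $\fkc\subseteq\tr_R(\omega_R)$ is exactly Lemma~\ref{a2.2}, that $\fkc$ is a nonzero ideal of $\ol{R}$ (so $\fkc\ol{R}=\fkc$), and that $\tr_R(\omega_R)\subseteq R$.

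Next I would prove (i)$\Leftrightarrow$(iii) by showing that each is equivalent to the single equality $R:C=\fkc$. If $R:C=\fkc$, then $\tr_R(\omega_R)=(R:C)C=\fkc C\subseteq\fkc\ol{R}=\fkc$, and together with Lemma~\ref{a2.2} this forces $\tr_R(\omega_R)=\fkc$, that is, (i); conversely (i) gives $R:C\subseteq\tr_R(\omega_R)=\fkc\subseteq R:C$, hence $R:C=\fkc$. On the other hand, by the two displayed identities the equality $R:C=\fkc$ reads $C:C^{2}=C:\ol{R}$, and applying the injective involution $C:(-)$ this is equivalent to $C^{2}=\ol{R}$, i.e.\ (iii). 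Hence (i)$\Leftrightarrow R:C=\fkc\Leftrightarrow$(iii).

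To incorporate (ii) I would close the cycle with (iii)$\Rightarrow$(ii)$\Rightarrow$(i). For (iii)$\Rightarrow$(ii): (iii) gives (i), so $\tr_R(\omega_R)=\fkc$; since $\ol{R}$ is module-finite over $R$ by (b) and local by (c), it is a DVR, so its nonzero ideal $\fkc$ is principal and therefore $\fkc\cong\ol{R}$, whence $\tr_R(\omega_R)\cong\ol{R}$. For (ii)$\Rightarrow$(i): an $R$-module isomorphism of fractional ideals $\tr_R(\omega_R)\cong\ol{R}$ is multiplication by a non-zerodivisor $\alpha\in\rmQ(R)$, so $\tr_R(\omega_R)=\alpha\ol{R}$; since $\tr_R(\omega_R)\subseteq R$ we get $\alpha\ol{R}\subseteq R$, i.e.\ $\alpha\in R:\ol{R}=\fkc$, and therefore $\tr_R(\omega_R)=\alpha\ol{R}\subseteq\fkc\ol{R}=\fkc$; Lemma~\ref{a2.2} again yields $\tr_R(\omega_R)=\fkc$, which is (i). Combining, (ii)$\Rightarrow$(i)$\Leftrightarrow$(iii)$\Rightarrow$(ii).

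The conceptual crux, and the step I expect to require the most care, is the pair of identities $R:C=C:C^{2}$ and $R:\ol{R}=C:\ol{R}$: these rewrite both the trace ideal and the conductor in a form on which the canonical involution $C:(-)$ acts, after which the equivalence (i)$\Leftrightarrow$(iii) becomes purely formal and uses only conditions (a),(b). The only other delicate point is the isomorphism $\fkc\cong\ol{R}$ in (iii)$\Rightarrow$(ii); this is precisely where condition (c) enters, ensuring that $\ol{R}$ is a DVR so that the conductor is a principal $\ol{R}$-ideal.
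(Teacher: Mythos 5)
Your proof is correct and follows essentially the same route as the paper's: the same colon-ideal identities ($R:C=C:C^2$, $R:\ol{R}=C:\ol{R}$ via $C:C=R$), the same use of the involution $C:(-)$ from Remark~\ref{a2.1}(iv), the same appeal to Lemma~\ref{a2.2} and to $\ol{R}$ being a DVR. The only difference is organizational—you pivot all implications through the explicit equality $R:C=R:\ol{R}$ and close the cycle as (ii)$\Rightarrow$(i)$\Leftrightarrow$(iii)$\Rightarrow$(ii) rather than proving the paper's four separate implications—which is a tidy but not substantively different presentation.
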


\begin{proof}
(i) $\Rightarrow$ (ii): Note that $R:\ol{R}$ is a nonzero ideal of $\ol{R}$. Since $\ol{R}$ is a discrete valuation ring (for example, see \cite[Theorem 2.2.22]{BH}), it follows that $\tr_R(\omega_R)=R:~\ol{R}\cong \ol{R}$.

(ii) $\Rightarrow$ (i): $\tr_R(\omega_R)=\alpha \ol{R}$ for some $\alpha\in \rmQ(R)$. We then have 
\[
\alpha\in \alpha \ol{R}=\tr_R(\omega_R)\subseteq R,
\]
thus $\alpha \ol{R}$ is an ideal of $\ol{R}$ in $R$. Hence, $\tr_R(\omega_R)=\alpha \ol{R}\subseteq R:\ol{R}$. The reverse inclusion follows from Lemma \ref{a2.2}.

(i) $\Rightarrow$ (iii): This follows from the following implications.
\begin{align*}
\text{$R$ is a far-flung Gorenstein ring} &\Leftrightarrow R:\ol{R} =\tr_R(\omega_R) &&\Leftrightarrow R:\ol{R} =(R:C)C \\
&\Rightarrow R:\ol{R}=R:C &&\Leftrightarrow (C:C):\ol{R}=(C:C):C \\
&\Leftrightarrow C:C\ol{R}=C:C^2 &&\Leftrightarrow C^2=C\ol{R}=\ol{R},
\end{align*}
where the third implication follows from the inclusions $R:\ol{R}  \subseteq R:C \subseteq (R:C)C$ and the sixth equivalence follows by applying the $C$-dual $\Hom_R(-, C)=C:-$.

(iii) $\Rightarrow$ (i): By the argument of (i) $\Rightarrow$ (iii), $C^2=\ol{R}$ implies that $R:\ol{R}=R:C$. Hence, 
\[
\tr_R(\omega_R)=(R:C)C=(R:\ol{R})C=R:\ol{R},
\]
where the third equality follows from $R:\ol{R}\subseteq (R:\ol{R})C\subseteq (R:\ol{R})\ol{R}=R:\ol{R}$.
\end{proof}

By using Theorem \ref{0.8} we find an upper bound of the multiplicity for far-flung Gorenstein rings. 
Recall that there is a lower bound of the multiplicity of Cohen-Macaulay local rings.

\begin{Fact} {\rm (cf. \cite[3.1 Proposition]{S3})}\label{b2.5}
If $(R, \fkm)$ is a Cohen-Macaulay local ring with $\rme(R)>1$ (not necessarily of dimension one), then $\rmr(R)+1 \le \rme(R)$. Furthermore, $\rmr(R)+1 = \rme(R)$ holds if and only if $R$ has minimal multiplicity.
\end{Fact}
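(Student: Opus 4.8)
The plan is to reduce the statement to an Artinian ring and then count socle dimensions. First I would arrange that the residue field $k=R/\fkm$ is infinite: replacing $R$ by the faithfully flat local extension $R[X]_{\fkm R[X]}$ leaves $\rme(R)$, $\rmr(R)$, $v(R)$ and $\dim R$ unchanged, preserves Cohen-Macaulayness, and does not affect whether $R$ has minimal multiplicity, so there is no loss of generality in assuming $k$ infinite.

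With $k$ infinite and $d=\dim R$, I would choose a minimal reduction $\fkq=(x_1,\dots,x_d)$ of $\fkm$. Since $R$ is Cohen-Macaulay, $x_1,\dots,x_d$ is a regular sequence, so $A=R/\fkq$ is an Artinian local ring with maximal ideal $\fkm_A=\fkm/\fkq$ and residue field $k$. The three facts I would invoke are standard: (1) $\ell_R(A)=\rme(R)$, because $\fkq$ is a parameter ideal generated by a regular sequence and a reduction of $\fkm$; (2) $\rmr(R)=\rmr(A)=\dim_k\Soc(A)$, since the Cohen-Macaulay type is unchanged modulo a regular sequence and coincides with the socle dimension in the Artinian case; and (3) $\dim_k \fkm_A/\fkm_A^2=v(R)-d$, since the $x_i$ extend to a minimal generating set of $\fkm$. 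In this way the whole assertion is transported to a claim about $A$, namely $\dim_k\Soc(A)+1\le \ell_R(A)$, with equality if and only if $\fkm_A^2=0$.

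For the inequality, since $\rme(R)>1$ we have $\fkm_A\neq 0$, so $A$ is not a field and $1\notin\Soc(A)$; hence $\Soc(A)$ is a proper $A$-submodule and $\dim_k\Soc(A)\le \ell_R(A)-1$. For the equality case I would use the length decomposition $\ell_R(A)=\dim_k\Soc(A)+\ell_R(A/\Soc(A))$: equality in the bound holds iff $\ell_R(A/\Soc(A))=1$, i.e. $A/\Soc(A)\cong k$, i.e. $\Soc(A)=\fkm_A$, which is exactly $\fkm_A^2=0$. Finally I would identify $\fkm_A^2=0$ with minimal multiplicity using Abhyankar's bound: if $\fkm_A^2=0$ then $\ell_R(A)=1+\dim_k\fkm_A/\fkm_A^2=v(R)-d+1$, the minimal possible value of $\rme(R)$; conversely $\rme(R)=v(R)-d+1$ forces $\ell_R(\fkm_A^2)=0$ in the expansion $\ell_R(A)=1+(v(R)-d)+\ell_R(\fkm_A^2)$. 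Since both $\rmr(R)+1=\rme(R)$ and minimal multiplicity independently translate into $\fkm_A^2=0$, they are equivalent.

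The routine core here, the socle inequality and the length bookkeeping, is elementary; the genuine content lies in the reduction step, namely the standard but non-trivial package that a minimal reduction of $\fkm$ is a regular sequence whose colength computes $\rme(R)$ and that the type is preserved modulo it. I expect this invariance package to be the only real obstacle, and I would either cite it or build it from the Hilbert--Samuel theory of reductions in Cohen--Macaulay rings.
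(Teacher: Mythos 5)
Your proof is correct and is essentially the paper's own argument: both reduce to an infinite residue field, pass to the Artinian ring $A=R/Q$ modulo a minimal reduction $Q$ of $\fkm$, and compare $\rme(R)=\ell_R(A)$ with $\rmr(R)=\dim_k\Soc(A)$, your ``socle is a proper submodule'' inequality being the paper's $\ell_R(\fkm/Q)\ge \ell_R\left((Q:_R\fkm)/Q\right)$ in different words. The only divergence is cosmetic and concerns the equality case: the paper translates equality into $\fkm=Q:_R\fkm$ and cites \cite[Theorem 2.2]{CP} to conclude $\fkm^2=Q\fkm$, while you translate it into $\fkm_A^2=0$ and then into the Abhyankar equality $\rme(R)=v(R)-\dim R+1$ by length bookkeeping --- these are equivalent characterizations of minimal multiplicity, and your fact that the generators of $Q$ remain linearly independent in $\fkm/\fkm^2$ is exactly what bridges $\fkm^2\subseteq Q$ and $\fkm^2=Q\fkm$ if one insists on the paper's formulation.
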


\begin{proof}
We may assume that $R/\fkm$ is infinite by passing to $R \to R[X]_{\fkm R[X]}$. Then we can choose a parameter ideal $Q$ as a reduction of $\fkm$. Hence we obtain that
\[
\rme(R)-1=\ell_R (R/Q)-\ell_R (R/\fkm)=\ell_R (\fkm/Q) \ge \ell_R ((Q:_R\fkm)/Q)=\rmr (R).
\] 
The equality holds true if and only if $\fkm = Q:_R\fkm$, which is equivalent to saying that $\fkm^2 =Q\fkm $ by \cite[Theorem 2.2.]{CP}.
\end{proof}

\begin{Corollary}\label{0.11}
Suppose that $R$ satisfies the conditions {\rm (a)-(d)}. If $R$ is a far-flung Gorenstein ring, then the inequalities
\[
\rmr(R)+1 \le \rme(R) \le \binom{\rmr(R)+1}{2}
\] 
hold.
\end{Corollary}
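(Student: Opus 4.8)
The plan is to derive both inequalities from the characterization $C^2=\ol{R}$ of Theorem \ref{0.8}, combined with the identity $\rme(R)=\mu_R(\ol{R})$. The lower bound $\rmr(R)+1\le \rme(R)$ is essentially already recorded in Fact \ref{b2.5}: as soon as $R$ is not a discrete valuation ring, i.e.\ $\rme(R)>1$, that statement applies verbatim. The regular case $\rme(R)=1$ must be excluded here (it forces $\rmr(R)=1$, where the lower bound $2\le 1$ fails while the upper bound holds trivially), exactly as in the hypothesis of Fact \ref{b2.5}. So all the substance lies in the upper bound, and the whole point will be to read off $\binom{\rmr(R)+1}{2}$ as the number of symmetric quadratic products of a minimal generating set of $C$.

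First I would establish the multiplicity identity $\rme(R)=\mu_R(\ol{R})$. By condition (d) and Remark \ref{a2.1}(iii) we have $\fkm\ol{R}=a\ol{R}$, so that $\mu_R(\ol{R})=\ell_R(\ol{R}/\fkm\ol{R})=\ell_R(\ol{R}/a\ol{R})$ by Nakayama's lemma, the first equality because $\ol{R}/\fkm\ol{R}$ is an $R/\fkm$-vector space. On the other hand, since $(a)$ is a reduction of $\fkm$ and $R$ is one-dimensional Cohen--Macaulay, $\rme(R)=\ell_R(R/aR)$. Comparing the two filtrations of $\ol{R}/aR$ coming from $aR\subseteq R\subseteq \ol{R}$ and from $aR\subseteq a\ol{R}\subseteq \ol{R}$, and using that multiplication by $a$ yields an isomorphism $\ol{R}/R\xrightarrow{\ \cong\ }a\ol{R}/aR$, the two copies of $\ell_R(\ol{R}/R)$ cancel and I obtain $\ell_R(R/aR)=\ell_R(\ol{R}/a\ol{R})$. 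Hence $\rme(R)=\ell_R(\ol{R}/a\ol{R})=\mu_R(\ol{R})$.

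Next I would invoke Theorem \ref{0.8}, which gives $C^2=\ol{R}$ because $R$ is far-flung Gorenstein. Writing $r=\rmr(R)=\mu_R(\omega_R)=\mu_R(C)$ (the Cohen--Macaulay type equals the minimal number of generators of the canonical module, and $C\cong\omega_R$), I choose a minimal generating set $x_1,\dots,x_r$ of $C$ as an $R$-module. Then $\ol{R}=C^2$ is generated over $R$ by the products $x_ix_j$ with $1\le i\le j\le r$, of which there are $\binom{r+1}{2}$. Therefore $\mu_R(\ol{R})=\mu_R(C^2)\le\binom{r+1}{2}$, and combining with the previous paragraph yields $\rme(R)=\mu_R(\ol{R})\le\binom{\rmr(R)+1}{2}$, as desired.

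The individual computations are routine once the right objects are in play; the only conceptual step is recognizing that the far-flung Gorenstein hypothesis, through the equality $C^2=\ol{R}$, converts the bound on the multiplicity into a purely combinatorial count of the symmetric products of a minimal generating set of $C$. The mild technical points to watch are the bookkeeping in the length identity $\rme(R)=\mu_R(\ol{R})$ (the cancellation of the two copies of $\ell_R(\ol{R}/R)$), and remembering to exclude the regular case when citing Fact \ref{b2.5} for the lower bound.
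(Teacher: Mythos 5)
Your proof is correct and follows essentially the same route as the paper's: Theorem~\ref{0.8} gives $C^2=\ol{R}$, the $\binom{\rmr(R)+1}{2}$ pairwise products of a minimal generating set of $C$ bound $\mu_R(\ol{R})=\mu_R(C^2)$, the identity $\rme(R)=\mu_R(\ol{R})$ comes from $\fkm\ol{R}=a\ol{R}$ and the reduction $(a)$, and the lower bound is Fact~\ref{b2.5}. The only (harmless) divergence is in justifying $\ell_R(\ol{R}/a\ol{R})=\rme(R)$: you cancel lengths in the two filtrations $aR\subseteq R\subseteq\ol{R}$ and $aR\subseteq a\ol{R}\subseteq\ol{R}$, whereas the paper cites additivity of multiplicity, $\rme(\ol{R})=\rme(R)+\rme(\ol{R}/R)=\rme(R)$; your explicit exclusion of the discrete valuation ring case for the lower bound is also a point of care the paper leaves implicit.
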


\begin{proof}
By Theorem \ref{0.8}, $\mu_R(\ol{R})=\mu_R(C^2)\le \binom{\rmr(R)+1}{2}$. Meanwhile, we have $\mu_R(\ol{R})=\ell_R(\ol{R}/\fkm \ol{R})=\ell_R(\ol{R}/a\ol{R})$ by Remark \ref{a2.1}(iii). Since $(a)$ is a reduction of $\fkm$, $\ell_R(\ol{R}/a\ol{R})=\rme(\ol{R})$. Therefore, since $\rme(\ol{R})=\rme(R)+ \rme(\ol{R}/R)=\rme(R)$ by the additivity of the multiplicity, we obtain that $\mu_R(\ol{R})=\rme(R)$.
\end{proof}

\begin{Corollary}\label{cor:type2}
Suppose that $R$ satisfies the conditions {\rm (a)-(d)}. If $R$ is a far-flung Gorenstein ring of type $2$, then $R$ has minimal multiplicity. Moreover, under these hypotheses, the multiplicity of $R$ is $3$. 
\end{Corollary}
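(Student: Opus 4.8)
The plan is to deduce both assertions directly from the bounds in Corollary~\ref{0.11} together with the equality criterion in Fact~\ref{b2.5}; no fresh computation is required, since the hypothesis $r=2$ makes the upper and lower bounds for the multiplicity coincide.

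First I would specialize Corollary~\ref{0.11} to $r=\rmr(R)=2$. Since $R$ is far-flung Gorenstein and satisfies the conditions (a)--(d), that corollary gives
\[
3 = \rmr(R)+1 \le \rme(R) \le \binom{\rmr(R)+1}{2} = \binom{3}{2} = 3.
\]
The two extreme members of this chain agree, so the squeeze forces $\rme(R)=3$; this settles the ``moreover'' assertion about the value of the multiplicity.

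To obtain minimal multiplicity I would invoke the equality statement in Fact~\ref{b2.5}. Because $\rme(R)=3>1$, that fact applies, and the relation $\rmr(R)+1 = 3 = \rme(R)$ is precisely its equality case; hence $R$ has minimal multiplicity. Note that the logical order matters here: one first needs the upper bound from Corollary~\ref{0.11} to pin down $\rme(R)=3$, and only then can the equality case of Fact~\ref{b2.5} be triggered.

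Since the result falls out immediately from the two cited statements, there is essentially no obstacle to overcome. The only point worth verifying is the hypothesis $\rme(R)>1$ required for Fact~\ref{b2.5}, and this is immediate from the lower bound $\rme(R)\ge 3$ already produced.
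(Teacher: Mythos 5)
Your proposal is correct and follows exactly the paper's own argument: squeeze $\rme(R)$ between the two bounds of Corollary~\ref{0.11} with $r=2$ to get $\rme(R)=3$, then invoke the equality case of Fact~\ref{b2.5} to conclude minimal multiplicity. Your additional remark about verifying $\rme(R)>1$ is a careful (if routine) touch, but the route is the same.
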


\begin{proof}
Let $R$ be a far-flung Gorenstein ring with  $\rmr(R)=2$.  Then we have $3=\rmr(R)+1 \le \rme(R) \le \binom{\rmr(R)+1}{2}=3$ by Corollary \ref{0.11}. Hence $R$ has minimal multiplicity of $3$ by Fact \ref{b2.5}.  
\end{proof}

\begin{ex}\label{b2.8}
Let $R=K[|H|]$ be a numerical semigroup ring with $\rmr(R)=2$. Then $R$ is a far-flung Gorenstein ring if and only if $H=\langle 3, 3n+1, 3n+2\rangle$ for some integer $n>0$.
\end{ex}

\begin{proof}
If $R$ is a far-flung Gorenstein ring of type $2$, Corollary~\ref{cor:type2} implies $v(R)=\rme(R)=3$. 
It is known from  \cite[Proposition 2.5]{HHS2} that when $H$ is $3$-generated and not symmetric, the ring $K[|H|]$ is far-flung Gorenstein if and only if $H=\langle 3, 3n+1, 3n+2\rangle$ for some integer $n >0$.
\end{proof}

In contrast to Corollary~\ref{cor:type2}, far-flung Gorenstein rings need not have minimal multiplicity in general.

\begin{ex}\label{b2.9}
Let $R=K[|t^7, t^8, t^{11}, t^{17}, t^{20}|]$. Then $\tr_R(\omega_R)=R:\ol{R}=t^{14}\ol{R}$. Thus $R$ is far-flung Gorenstein, but $R$ does not have minimal multiplicity: $\rme(R)=7>v(R)=5$.
\end{ex}

Although Corollary \ref{0.11} gives an upper bound for the multiplicity, this is not sharp for numerical semigroup rings. We will return to this topic in Section \ref{section5.5}.


\section{The endomorphism algebra $\Hom_R(\fkm, \fkm)$}\label{section4}

Throughout this section, let $(R, \fkm)$ be a Cohen-Macaulay local ring of dimension one possessing the canonical module $\omega_R$, and suppose that $R$ satisfies the conditions {\rm (a)-(d)}. Here, we study the far-flung Gorenstein property of the endomorphism algebra $\Hom_R(\fkm, \fkm)$ in relation with that of $R$. Set $B=\fkm:\fkm\cong \Hom_R(\fkm, \fkm)$. The following lemmas are known, but we include a proof for the convenience of readers.

\begin{Lemma}\label{b3.1}
$B$ is a Cohen-Macaulay local ring of dimension one and $\ol{B}=\ol{R}$.
\end{Lemma}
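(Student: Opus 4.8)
The plan is to exploit the two-sided containment $R\subseteq B\subseteq \ol{R}$ together with the fact that over a one-dimensional Cohen-Macaulay local ring every nonzero fractional ideal is maximal Cohen-Macaulay. First I would record that $B=\fkm:\fkm$ is a subring of $\rmQ(R)$ containing $R$: it is closed under multiplication, since $\alpha\fkm\subseteq\fkm$ and $\beta\fkm\subseteq\fkm$ force $(\alpha\beta)\fkm\subseteq\fkm$. Moreover $\fkm$ is finitely generated and faithful (it contains the non-zerodivisor $a$ of condition (d)), so by the determinant trick every $\alpha\in B$ is integral over $R$; hence $B\subseteq\ol{R}$. Since $\ol{R}$ is module-finite over $R$ by condition (b) and $R$ is Noetherian, the intermediate module $B$ is again a finitely generated $R$-module. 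In particular $B$ is a fractional ideal of $R$ with $\rmQ(B)=\rmQ(R)$, and the identification $B\cong\Hom_R(\fkm,\fkm)$ is the one of Remark \ref{a2.1}(iv).

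Next I would settle the numerical invariants. As $B$ is module-finite over $R$, it is a one-dimensional Noetherian ring with $\dim B=\dim R=1$. For the Cohen-Macaulay property, note that $B$, being a nonzero fractional ideal of the one-dimensional Cohen-Macaulay local ring $R$, is torsion-free and hence a maximal Cohen-Macaulay $R$-module, so $\depth_R B=1$; since depth is unchanged under the module-finite ring extension $R\subseteq B$, this gives $\depth_B B=1=\dim B$, whence $B$ is Cohen-Macaulay. (Equivalently, $a$ is a $B$-regular element and $\dim B=1$.)

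For locality I would use that $\ol{R}$ is integral over $B$ (indeed even over $R$). By lying-over and the fact that, in an integral extension, a prime of the larger ring is maximal exactly when its contraction is, every maximal ideal of $B$ has the form $\Pp\cap B$ for some maximal ideal $\Pp$ of $\ol{R}$; since $\ol{R}$ is local with maximal ideal $\fkn$ by condition (c), $B$ has the single maximal ideal $\fkn\cap B$ and is therefore local. Finally, for $\ol{B}=\ol{R}$: from $R\subseteq B\subseteq\ol{R}$ with $\rmQ(B)=\rmQ(R)$, any element integral over $B$ is integral over $R$, so $\ol{B}\subseteq\ol{R}$, while $\ol{R}$ is integral over $R\subseteq B$, giving $\ol{R}\subseteq\ol{B}$.

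The individual steps are standard, but the point requiring the most care will be the locality of $B$: one cannot simply inherit it from $R$, and must instead invoke condition (c) through the behavior of maximal ideals under the integral extension $B\subseteq\ol{R}$. The remaining inputs (integrality via the determinant trick, finiteness from (b), and the maximal Cohen-Macaulay property of fractional ideals in dimension one) are routine.
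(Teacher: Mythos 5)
Your proposal is correct and follows essentially the same route as the paper's (much terser) proof: $B\subseteq\ol{R}$ by integrality, module-finiteness of $B$ over $R$, the sandwich $R\subseteq B\subseteq\ol{R}$ giving $\ol{B}=\ol{R}$, and locality because every maximal ideal of $B$ is forced to be $\fkn\cap B$ via the integral extension $B\subseteq\ol{R}$ and condition (c). You simply fill in details the paper leaves implicit (the determinant trick, the dimension/depth count making $B$ Cohen--Macaulay), which is a faithful expansion rather than a different argument.
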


\begin{proof}
$B$ is a subring of $\ol{R}$ and finitely generated as an $R$-module. Hence $\ol{B}=\ol{R}$ holds. 
Furthermore, any maximal ideal of $B$ is forced to be $\fkn\cap B$. It follows that $B$ is a local ring with the maximal ideal $\fkn\cap B$. 
\end{proof}

\begin{Lemma}\label{b3.2}
If $R$ is not a discrete valuation ring, then $R\subsetneq B=R:\fkm$.
\end{Lemma}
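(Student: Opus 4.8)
The plan is to establish the two assertions separately: first the equality $B=R:\fkm$ (that is, $\fkm:\fkm=R:\fkm$), and then the strict inclusion $R\subsetneq B$. The equality is where the hypothesis that $R$ is not a discrete valuation ring enters, and I expect it to be the conceptual heart of the argument; the strict inclusion will then follow from a standard socle computation using the non-zerodivisor supplied by condition (d).

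For the equality, the inclusion $\fkm:\fkm\subseteq R:\fkm$ is immediate since $\fkm\subseteq R$. For the reverse inclusion I would take $\alpha\in R:\fkm$ and observe that $\alpha\fkm$ is an $R$-submodule of $R$, hence an ideal. As $R$ is local, either $\alpha\fkm\subseteq\fkm$, in which case $\alpha\in\fkm:\fkm$ as desired, or $\alpha\fkm=R$. The second alternative says precisely that the fractional ideal $\alpha R$ is a multiplicative inverse of $\fkm$, so $\fkm$ is an invertible fractional ideal. Over the local ring $R$ an invertible ideal is free of rank one, hence principal, so $\fkm=(x)$; but a one-dimensional regular local ring is a discrete valuation ring, contradicting the hypothesis. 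Thus only the first alternative can occur, giving $R:\fkm\subseteq\fkm:\fkm$ and therefore $B=R:\fkm$.

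For the strict inclusion I would exploit condition (d): let $a\in\fkm$ be the non-zerodivisor whose existence is assumed, so that $R/(a)$ is an Artinian local ring and $((a):_R\fkm)/(a)$ is its socle, of length equal to the Cohen-Macaulay type $\rmr(R)\ge 1$. Choosing any $x\in((a):_R\fkm)\setminus(a)$ and putting $\alpha=x/a\in\rmQ(R)$, one has $\alpha\fkm=x\fkm/a\subseteq (a)/a=R$, so $\alpha\in R:\fkm=B$; moreover $\alpha\notin R$, for otherwise $x\in aR=(a)$, contrary to the choice of $x$. This exhibits an element of $B\setminus R$ and yields $R\subsetneq B$.

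I expect the only delicate point to be the implication invoked in the second paragraph, that invertibility of $\fkm$ forces $R$ to be a discrete valuation ring; everything else is routine manipulation of colon ideals. I would make this implication airtight by recalling that a finitely generated invertible (hence projective of rank one) module over a local ring is free, so $\fkm$ would be principal, and that a one-dimensional Noetherian local ring with principal maximal ideal is regular, i.e.\ a discrete valuation ring. Note that conditions (a)--(c) are not needed here; the argument uses only that $R$ is a one-dimensional Cohen-Macaulay local ring together with the non-zerodivisor from (d).
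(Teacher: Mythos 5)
Your proof is correct. For the equality $R:\fkm=\fkm:\fkm$ you argue exactly as the paper does: an element $\alpha\in(R:\fkm)\setminus(\fkm:\fkm)$ would make $\alpha\fkm$ an ideal of $R$ not contained in $\fkm$, hence $\alpha\fkm=R$, forcing $\fkm$ to be principal and $R$ to be a discrete valuation ring, a contradiction. (The paper concludes cyclicity of $\fkm$ directly from $\fkm f=R$; your detour through invertibility and projective-implies-free is heavier machinery but sound.) Where you genuinely diverge is the strict inclusion $R\subsetneq B$: the paper applies $\Hom_R(-,R)$ to $0\to\fkm\to R\to R/\fkm\to 0$ and reads off the exact sequence $0\to R\to R:\fkm\to\Ext_R^1(R/\fkm,R)\to 0$, concluding $\ell_R(B/R)=\rmr(R)>0$; you instead exhibit an explicit element of $B\setminus R$ by lifting a socle element of $R/(a)$, namely $x\in((a):_R\fkm)\setminus(a)$ gives $x/a\in(R:\fkm)\setminus R$. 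The two arguments are secretly the same computation, since $\Ext_R^1(R/\fkm,R)\cong((a):_R\fkm)/(a)$ via the non-zerodivisor $a$, but your version is more elementary (no homological algebra), while the paper's yields the stronger quantitative statement $\ell_R(B/R)=\rmr(R)$ rather than mere nonemptiness of $B\setminus R$. Your closing remark is correct and can even be sharpened: condition (d) itself is not needed, because any one-dimensional Cohen-Macaulay local ring has a non-zerodivisor in $\fkm$, which is all your socle argument requires — the reduction property of $(a)$ plays no role.
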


\begin{proof}
Suppose that there exists an element $f\in (R:\fkm)\setminus (\fkm:\fkm)$. Then $\fkm f\subseteq R$ but $\fkm f\not\subseteq \fkm$. It follows that $\fkm f =R$ and hence $\fkm$ is cyclic. This shows that $R$ is a discrete valuation ring, which is a contradiction. Thus $R:\fkm=B$. To prove $R\subsetneq B$, consider the short exact sequence $0\to \fkm \to R \to R/\fkm \to 0$. By applying the functor $\Hom_R(-, R)\cong R:-$, we obtain that 
\[
0 \to R=R:R \to B=R:\fkm \to \Ext_R^1(R/\fkm, R) \to 0. 
\]
Therefore, we have $\ell_R(B/R)=\rmr(R)>0$.
\end{proof}

\begin{Theorem}\label{b3.3}
Suppose that $R$ satisfies {\rm (a)-(d)}. If $R$ is a far-flung Gorenstein ring, then $B\cong \Hom_R(\fkm, \fkm)$ is also a far-flung Gorenstein ring.
\end{Theorem}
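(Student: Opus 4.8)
The plan is to reduce the far-flung Gorenstein property of $B$ to the characterization in Theorem \ref{0.8}(iii), applied to $B$: I will exhibit an explicit fractional canonical ideal $C_B$ of $B$ with $B\subseteq C_B\subseteq\ol{B}$ and show $C_B^2=\ol{B}$. First I would dispose of the trivial case: if $R$ is a discrete valuation ring then $\fkm$ is principal, $B=\fkm:\fkm=R$, and there is nothing to prove. So assume $R$ is not a DVR; by Lemma \ref{b3.2} we then have $B=R:\fkm$. By Lemma \ref{b3.1}, $B$ is a one-dimensional Cohen--Macaulay local ring with $\ol{B}=\ol{R}$, so $B$ inherits conditions (b) and (c) immediately (the normalization is module-finite and local). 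It remains to produce the fractional canonical ideal demanded by (a) and to verify the square condition.

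The heart of the argument is the identification of the canonical module of $B$ as the correct fractional ideal. Since $C\cong\omega_R$, the well-known change-of-rings isomorphism for canonical modules along the finite birational extension $R\hookrightarrow B$ gives $\omega_B\cong\Hom_R(B,\omega_R)\cong C:B$ (using Remark \ref{a2.1}(iv)). I claim that, as fractional ideals,
\[
C:B=\fkm C .
\]
To prove this I would first rewrite $R:\fkm$ by $C$-duality: from $C:C=R$ and the colon arithmetic $(C:C):\fkm=C:(C\fkm)$ one gets $R:\fkm=C:(C\fkm)$; then applying the involution $I\mapsto C:I$ of Remark \ref{a2.1}(iv) to $I=C\fkm$ yields
\[
C:B=C:(R:\fkm)=C:\bigl(C:(C\fkm)\bigr)=C\fkm .
\]
Thus $\omega_B\cong\fkm C$. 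I emphasize that this identity uses only $C:C=R$ and the double-duality $C:(C:I)=I$, so it holds independently of the far-flung hypothesis on $R$.

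Next I would normalize. Set $C_B:=\fkm C/a$. Since $a=a\cdot 1\in\fkm C$ we have $1\in C_B$, and as $\fkm C$ (hence $C_B$) is a $B$-module, it follows that $B\subseteq C_B$. On the other hand $\fkm\subseteq a\ol{R}$ and $C\subseteq\ol{R}$ give $\fkm C\subseteq a\ol{R}$, whence $C_B\subseteq\ol{R}=\ol{B}$. Since $C_B\cong\fkm C\cong\omega_B$, this shows $B$ satisfies (a)--(c), so Theorem \ref{0.8} applies to $B$. Finally, feeding in the far-flung hypothesis on $R$ in the form $C^2=\ol{R}$ (Theorem \ref{0.8}(iii)) together with $\fkm\ol{R}=a\ol{R}$ (Remark \ref{a2.1}(iii)),
\[
C_B^2=\frac{\fkm^2C^2}{a^2}=\frac{\fkm^2\ol{R}}{a^2}=\frac{(a\ol{R})^2}{a^2}=\ol{R}=\ol{B},
\]
so by Theorem \ref{0.8}(iii) applied to $B$, the ring $B$ is far-flung Gorenstein.

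The main obstacle, and the only nonroutine point, is the identity $C:B=\fkm C$: once the canonical module of $B$ is pinned down as this specific fractional representative, the case reduction, the normalization, and the final one-line computation of $C_B^2$ are all immediate. The delicate aspect is that one needs the \emph{fractional} equality (not merely $\omega_B\cong\fkm C$ in the abstract), and the derivation rests on the canonical duality $C:(C:I)=I$, which is available precisely because $R$ satisfies condition (a).
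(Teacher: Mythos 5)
Your proposal is correct, and it reaches the conclusion by a route that overlaps with the paper's at the decisive step but diverges in how it closes. Both arguments hinge on the same identification: using $B=R:\fkm=(C:C):\fkm=C:\fkm C$ and the duality $C:(C:I)=I$ of Remark \ref{a2.1}(iv), one gets $\omega_B\cong C:B=\fkm C$; this is exactly Claim \ref{claim1}'s key computation, and your derivation of it is the same as the paper's. Where you differ is afterwards. The paper verifies the \emph{definition} of far-flung Gorenstein for $B$: it proves $B:\ol{B}=\frac{1}{a}(R:\ol{R})$ and $\tr_B(\omega_B)=\frac{1}{a}\tr_R(\omega_R)$, the latter requiring the further colon computation $B:\fkm C=\frac{1}{a^2}(R:C)$ together with the fact $R:C=R:\ol{R}$ extracted from the proof of Theorem \ref{0.8}. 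You instead normalize, setting $C_B=\frac{\fkm C}{a}$ (correctly noting that $\fkm C$ is a $B$-module since $B\fkm\subseteq\fkm$, that $1\in C_B$ because $a\in\fkm C$, and that $\fkm C\subseteq a\ol{R}$ by Remark \ref{a2.1}(iii)), and then verify criterion (iii) of Theorem \ref{0.8} for $B$ by the one-line computation $C_B^2=\frac{\fkm^2C^2}{a^2}=\frac{\fkm^2\ol{R}}{a^2}=\frac{a^2\ol{R}}{a^2}=\ol{B}$. Using Theorem \ref{0.8} both as input (for $R$, via $C^2=\ol{R}$) and as output (for $B$) buys you a shorter endgame with no trace-ideal or conductor computation over $B$ at all; the paper's route is longer but yields the explicit transformation formulas $\tr_B(\omega_B)=\frac{1}{a}\tr_R(\omega_R)$ and $B:\ol{B}=\frac{1}{a}(R:\ol{R})$, which record how the canonical trace and the conductor move under the passage from $R$ to $B$ and are of independent interest. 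Both proofs are complete and correct.
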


\begin{proof}
If $R$ is a discrete valuation ring, then the assertion of the theorem is obvious because $R=B$.  Hence we may assume that $R$ is not a discrete valuation ring. Then the assertion follows from the following claim.
\end{proof}

\begin{claim}\label{claim1}
Suppose that $R$ satisfies {\rm (a)-(d)} and $R$ is a far-flung Gorenstein ring. Choose  an element $a \in \fkm$ such that $(a)$ is a reduction of $\fkm$ {\rm (}recall the condition {\rm (d)}{\rm )}. 
Then the following assertions hold true.
\begin{enumerate}[{\rm (i)}] 
\item $B:\ol{B}=\frac{1}{a} (R:\ol{R})$.
\item $\tr_B(\omega_B)=\frac{1}{a} \tr_R(\omega_R)$.
\end{enumerate} 	
\end{claim}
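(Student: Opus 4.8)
The plan is to treat both parts as manipulations of fractional ideals inside $\rmQ(R)$, using the $C$-duality $\Hom_R(-,C)=C:-$ together with the adjunction $(X:I):J=X:(IJ)$ and the biduality $C:(C:I)=I$ from Remark \ref{a2.1}(iv). Throughout I would assume $R$ is not a discrete valuation ring, which is exactly the case in which the Claim is used in the proof of Theorem \ref{b3.3}; then $\ol{B}=\ol{R}$ by Lemma \ref{b3.1} and $B=R:\fkm$ by Lemma \ref{b3.2}. Two small facts will be used repeatedly: $C:C=R$ (take $I=R$ in the biduality, noting $C:R=C$), and $\fkm\ol{R}=a\ol{R}$, hence $\fkm^2\ol{R}=a^2\ol{R}$, from Remark \ref{a2.1}(iii).

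For part (i) I would compute directly, and note that the far-flung hypothesis is not needed here. Writing $B:\ol{B}=(\fkm:\fkm):\ol{R}=\fkm:(\fkm\ol{R})$ and substituting $\fkm\ol{R}=a\ol{R}$ gives $B:\ol{B}=\fkm:(a\ol{R})=\frac{1}{a}(\fkm:\ol{R})$. It then remains to identify $\fkm:\ol{R}$ with the conductor $R:\ol{R}$. The inclusion $\fkm:\ol{R}\subseteq R:\ol{R}$ is clear, and conversely for $x\in R:\ol{R}$ one has $x\ol{R}\subseteq R:\ol{R}$ (the conductor is an $\ol{R}$-ideal), and $R:\ol{R}$ is a proper ideal of $R$ since $R\neq\ol{R}$, hence contained in $\fkm$; thus $x\ol{R}\subseteq\fkm$, i.e. $x\in\fkm:\ol{R}$. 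This yields $B:\ol{B}=\frac{1}{a}(R:\ol{R})$.

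For part (ii) the first key step is to identify the canonical module of $B$. Since $B$ is a finite birational extension of $R$, $\omega_B\cong\Hom_R(B,\omega_R)=C:B$ as a fractional ideal; using $B=R:\fkm=C:(C\fkm)$ and biduality, this collapses to $\omega_B\cong C\fkm$. Representing $\omega_B$ by $C\fkm$ and using the fractional-ideal trace formula $\tr_B(\omega_B)=(B:\omega_B)\,\omega_B$ (the analogue of $\tr_R(\omega_R)=(R:C)C$), the colon computation $B:(C\fkm)=(R:\fkm):(C\fkm)=R:(C\fkm^2)$ gives $\tr_B(\omega_B)=\bigl(R:(C\fkm^2)\bigr)(C\fkm)$.

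The final step, and the only place where the far-flung hypothesis is used, is to telescope this expression via $C^2=\ol{R}$ (Theorem \ref{0.8}). Using $R=C:C$ and the adjunction, $R:(C\fkm^2)=(C:C):(C\fkm^2)=C:(C^2\fkm^2)=C:(a^2\ol{R})=\frac{1}{a^2}(C:\ol{R})$, where $C^2\fkm^2=\ol{R}\fkm^2=a^2\ol{R}$. Combining with $C:\ol{R}=R:\ol{R}$ (again from $C:C=R$ and $C\ol{R}=\ol{R}$), with $(R:\ol{R})C=R:\ol{R}$ (since $1\in C\subseteq\ol{R}$ and $R:\ol{R}$ is an $\ol{R}$-ideal), and with $\fkm(R:\ol{R})=a(R:\ol{R})$, the product collapses to $\tr_B(\omega_B)=\frac{1}{a^2}\,a\,(R:\ol{R})=\frac{1}{a}(R:\ol{R})=\frac{1}{a}\tr_R(\omega_R)$, where the last equality is the far-flung Gorenstein property of $R$. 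I expect the main obstacle to be purely organizational: recognizing that $\omega_B$ simplifies to $C\fkm$ via biduality, and then inserting the relation $C^2=\ol{R}$ at precisely the right moment so that the factors of $C$ and the powers of $a$ cancel; once $\omega_B=C\fkm$ is established, the remainder is routine colon-product algebra.
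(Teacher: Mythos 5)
Your proposal is correct and takes essentially the same route as the paper: both parts are the same fractional-ideal colon manipulations, with $\omega_B$ identified as $C:B=\fkm C$, the trace computed as $(B:\fkm C)\fkm C$, and everything collapsed to the conductor using $\fkm\ol{R}=a\ol{R}$ and the far-flung hypothesis. The only cosmetic differences are that in (i) you start from $B=\fkm:\fkm$ and so need the easy extra identity $\fkm:\ol{R}=R:\ol{R}$, whereas the paper computes directly from $B=R:\fkm$, and in (ii) you invoke $C^2=\ol{R}$ where the paper uses the equivalent intermediate fact $R:C=R:\ol{R}$ from the proof of Theorem \ref{0.8}.
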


\begin{proof}[Proof of Claim \ref{claim1}]
(i): This follows from 
\begin{align*}
B:\ol{B}=B:\ol{R}=(R:\fkm):\ol{R}=R:\fkm\ol{R}=R:a\ol{R}=\frac{1}{a} (R:\ol{R})
\end{align*}
by Remark \ref{a2.1}(iii) and Lemmas \ref{b3.1} and \ref{b3.2}.

(ii): Since we have $\omega_B\cong \Hom_R(B, C)\cong C:B$, we obtain that 
\[
\tr_B(\omega_B)=(B:(C:B))(C:B).
\] 
By noting that $B=R:\fkm=(C:C):\fkm=C:\fkm C$, we obtain that $C:B=C:~(C:\fkm C)=\fkm C$. Hence we have $\tr_B(\omega_B)=(B:\fkm C)\fkm C$.

Furthermore, we can compute $B:\fkm C$ as follows:
\begin{align*}
B:\fkm C&=(R:\fkm):\fkm C=R:\fkm^2 C = (R:C) : \fkm^2\\
&=(R: \ol{R}):\fkm^2= R:\fkm^2 \ol{R}=R:a^2\ol{R}=\frac{1}{a^2} (R:\ol{R})\\
&=\frac{1}{a^2} (R:C),
\end{align*}
where the fourth and the eighth equalities follow from the fact $R:C=R:\ol{R}$ (see the proof of Theorem \ref{0.8} (i)\implies (iii)). 
It follows that 
\begin{align*}
\tr_B(\omega_B)=\frac{1}{a^2}(R:C)\fkm C=\frac{\fkm}{a^2}\tr_R(\omega_R)=\frac{\fkm}{a^2}(R:\ol{R})=\frac{a}{a^2}(R:\ol{R})=\frac{1}{a}\tr_R(\omega_R).
\end{align*}
\end{proof}

The converse of Theorem \ref{b3.3} is not true.

\begin{ex} 
Let $R=K[|t^4, t^5, t^6|]$ be a numerical semigroup ring, where $K$ is a field. Let $\fkm$ be the maximal ideal of $R$. Then $R$ is not far-flung Gorenstein by Example \ref{b2.8}. But 
\[
\fkm:\fkm=K[|t^4, t^5, t^6, t^7|]
\]
is a far-flung Gorenstein ring by Corollary \ref{ffg-minmult-aseq}.
\end{ex}


\section{Reflexive modules over far-flung Gorenstein rings}\label{section5}

Let $(R, \fkm)$ be a Cohen-Macaulay local ring of dimension one possessing the canonical module $\omega_R$. 
Suppose that $R$ satisfies the conditions {\rm (a)-(c)}. If $R$ is a far-flung Gorenstein ring, then we observe that by applying the functor $\Hom_R(\omega_R, -)$ to $R$
\[
\Hom_R(\omega_R, R) \cong R:C = R:\ol{R} \cong \ol{R}.
\]
This observation can be generalized in the following way. Let $(-)^*$ and $(-)^\vee$ denote the $R$-dual $\Hom_R(-, R)$ and the canonical dual $\Hom_R(-, \omega_R)$, respectively.

\begin{thm} \label{thm4.1}
Let $R$ be a far-flung Gorenstein ring, and let $M$ be a reflexive module of rank $r>0$.  Then $\Hom_R(\omega_R, M) \cong \ol{R}^r$.
\end{thm}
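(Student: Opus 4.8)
The plan is to reduce to the fractional ideal $C\cong\omega_R$ of condition (a) and then to transport the rank-one computation $\Hom_R(\omega_R,R)\cong\ol R$ across all of $M$ by a tensor–Hom adjunction, using reflexivity of $M$ to put it in the form $\Hom_R(M^*,R)$ and using the far-flung hypothesis to trade $C$ for $\ol R$. Recall from Theorem \ref{0.8} that $C^2=\ol R$, and that the proof of that theorem records the identity $R:C=R:\ol R$; this last equality is the engine of the whole argument.

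Concretely, I would first rewrite $\Hom_R(\omega_R,M)\cong\Hom_R(C,M)$. Since $M$ is reflexive, $M\cong M^{**}=\Hom_R(M^*,R)$, so by the adjunction $\Hom_R(A,\Hom_R(B,R))\cong\Hom_R(B,\Hom_R(A,R))$ (both sides being $\Hom_R(A\otimes_R B,R)$), applied with $A=C$ and $B=M^*$, together with $\Hom_R(C,R)=R:C$ (Remark \ref{a2.1}(iv)),
\[
\Hom_R(C,M)\cong\Hom_R\bigl(M^*,\Hom_R(C,R)\bigr)=\Hom_R(M^*,R:C).
\]
Now the far-flung hypothesis enters through $R:C=R:\ol R$, giving $\Hom_R(M^*,R:C)=\Hom_R(M^*,R:\ol R)$. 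Running the same adjunction backwards with $\ol R$ in place of $C$ (and $R:\ol R=\Hom_R(\ol R,R)$) yields
\[
\Hom_R(M^*,R:\ol R)\cong\Hom_R\bigl(\ol R,\Hom_R(M^*,R)\bigr)=\Hom_R(\ol R,M^{**})\cong\Hom_R(\ol R,M).
\]
Tracing naturality, the composite isomorphism is exactly the restriction map along $C\hookrightarrow\ol R$; it is an isomorphism precisely because $R:C$ and $R:\ol R$ coincide as submodules of $\rmQ(R)$, so the induced map $\Hom_R(M^*,R:\ol R)\to\Hom_R(M^*,R:C)$ is the identity.

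It then remains to identify $\Hom_R(\ol R,M)$. This module carries the natural $\ol R$-module structure coming from its source; it is finitely generated (as $\ol R$ is module-finite over $R$ by condition (b)) and torsion-free over $\ol R$ (as $M$ is torsion-free), and a rank count over $\rmQ(R)$ shows it has rank $r$, since $\ol R\otimes_R\rmQ(R)=\rmQ(R)$ while $M\otimes_R\rmQ(R)\cong\rmQ(R)^r$. Because $\ol R$ is a discrete valuation ring under conditions (a)–(c), a finitely generated torsion-free $\ol R$-module is free, whence $\Hom_R(\ol R,M)\cong\ol R^r$ and the theorem follows.

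The step I expect to carry the real weight is the adjunction chain combined with the swap $R:C=R:\ol R$: reflexivity of $M$ is exactly what licenses writing $M=\Hom_R(M^*,R)$ and pulling $C$ (resp.\ $\ol R$) out of the inner Hom, while the far-flung hypothesis is exactly what makes the two colon modules agree, so that $C$ may be replaced by $\ol R$ without changing the Hom. Put differently, the crux is that the restriction map $\Hom_R(\ol R,M)\to\Hom_R(C,M)$ is an isomorphism for reflexive $M$, and this is forced by $R:C=R:\ol R$; once this is in place, the final freeness assertion is routine since $\ol R$ is a DVR.
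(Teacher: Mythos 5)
Your proof is correct, and it follows a genuinely different route from the one in the paper. The paper stays inside its trace-ideal framework: from an exact sequence $0 \to M \to F_1 \to F_2$ it obtains a surjection $F_1^\vee \to M^\vee$, hence $\tr_R(M^\vee) \subseteq \tr_R(\omega_R) = R:\ol{R}$ by \cite[Proposition 2.8]{Lin}; it then squeezes $\ol{R} \subseteq R:(R:\ol{R}) \subseteq R:\tr_R(M^\vee) = \tr_R(M^\vee):\tr_R(M^\vee) \subseteq \ol{R}$ using \cite[Corollary 2.2]{GIK2}, dualizes the evaluation surjection to embed $\ol{R} = R:\tr_R(M^\vee)$ into $\Hom_R((M^\vee)^*,(M^\vee)^*)$, invokes \cite[Proposition 2.4]{IK} or \cite[(7.2) Proposition]{Ba} to regard the reflexive module $(M^\vee)^*$ as an $\ol{R}$-module (hence free, $\ol{R}$ being a discrete valuation ring), and finally identifies $(M^\vee)^*$ with $\Hom_R(\omega_R,M)$ by canonical duality. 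You bypass all of that: your only use of the hypothesis is the identity $R:C = R:\ol{R}$, which is indeed recorded in the proof of Theorem \ref{0.8} (and is in fact equivalent to the far-flung property, by the argument for (iii)$\Rightarrow$(i) there), and two tensor-hom swaps turn $\Hom_R(C,M)$ into $\Hom_R(\ol{R},M)$, where the needed $\ol{R}$-module structure exists for free; no appeal to \cite{Lin}, \cite{GIK2}, \cite{IK}, or \cite{Ba} is required. Your route is therefore more elementary and self-contained, and it makes the isomorphism explicit as restriction along $C \subseteq \ol{R}$; the paper's route, in exchange, produces the intermediate fact $R:\tr_R(M^\vee) = \ol{R}$ for every reflexive $M$ of positive rank, which has independent interest and keeps the theorem tied to the trace-ideal theme of the paper. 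Two steps you label as routine deserve a line each in a full write-up: torsion-freeness of $\Hom_R(\ol{R},M)$ over $\ol{R}$ is not completely immediate because its elements are only $R$-linear (argue with a nonzero conductor element $c \in R:\ol{R}$: if $f$ vanishes on $x\ol{R}$ with $x \neq 0$, then $cx$ is a non-zerodivisor of $R$ and $cx\,f(y) = f(x(cy)) = 0$ for all $y \in \ol{R}$, so $f = 0$ since $M$ is torsion-free); and the rank count needs that $\Hom_R(\ol{R},-)$ commutes with the flat base change $-\otimes_R \rmQ(R)$, which holds because $\ol{R}$ is a finitely presented $R$-module.
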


\begin{proof} 
Since $M$ is reflexive, there exists an exact sequence 
$0 \to M \to F_1 \to F_2,$
where $F_1$ and $F_2$ are finitely generated free $R$-modules (see, for example, \cite[Proposition~4.1]{HKS}). Hence, by applying the canonical dual $(-)^\vee$, we obtain a surjection 
\[
F_1^\vee \cong \omega_R^{\rank_R F_1} \to M^\vee
\]
because $X$ is a maximal Cohen-Macaulay $R$-module, where $X$ is the image of the map $F_1 \to F_2$.  It follows that $\tr_R(\omega_R)\supseteq \tr_R(M^\vee)$ by \cite[Proposition 2.8(i)]{Lin}. Note that $\tr_R(M^\vee)$ contains a non-zerodivisor. Indeed, $\tr_R(M^\vee)_\fkp=\tr_{R_\fkp} ((M^\vee)_\fkp)=R_\fkp$ for all associated prime ideals  $\fkp$ by \cite[Proposition 2.8(viii)]{Lin} since $M^\vee$ has a positive rank. It follows that $\tr_R(M^\vee)\not\subseteq \fkp$ for all associated prime ideals  $\fkp$.

Therefore, we obtain that 
\[
\ol{R} \subseteq R:(R:\ol{R})=R:\tr_R(\omega_R) \subseteq R:\tr_R(M^\vee) =\tr_R(M^\vee) : \tr_R(M^\vee) \subseteq \ol{R},
\]
where the fourth equality follows from \cite[Corollary 2.2]{GIK2}.
On the other hand, we have a surjection
\[
(M^\vee)^* \otimes_R M^\vee \xrightarrow{\mathrm{ev}} \tr_R(M^\vee) \to 0, 
\]
where $\mathrm{ev}: f \otimes x \mapsto f(x)$ ($f\in (M^\vee)^*$ and $x\in M^\vee$), by definition of the trace ideal of $M^\vee$. Therefore, by applying the $R$-dual to the above surjection, we obtain that 
\[
0 \to R: \tr_R(M^\vee) = \ol{R} \to \Hom_R((M^\vee)^*, (M^\vee)^*).
\]
Since $(M^\vee)^*$ is a reflexive module, we can regard $(M^\vee)^*$ as an $\ol{R}$-module (see \cite[Proposition 2.4]{IK} or \cite[(7.2) Proposition]{Ba}). Since $\ol{R}$ is a discrete valuation ring, it follows that $(M^\vee)^* \cong (\ol{R})^r$.
This completes the proof since we have the isomorphisms
\[
(M^\vee)^* \cong \Hom_R(M^\vee \otimes_R \omega_R, \omega_R) \cong \Hom_R(\omega_R, M^{\vee \vee})\cong \Hom_R(\omega_R, M).
\] 
\end{proof}

\section{Revisiting the upper bound of the multiplicity}\label{section5.5}

Although Corollary \ref{0.11} gives an upper bound for the multiplicity, it is not sharp. In what follows, we investigate a sharp upper bound of the multiplicity of far-flung Gorenstein numerical semigroup rings.
Let $(R, \fkm)$ be a Cohen-Macaulay local ring of dimension one possessing the canonical module $\omega_R$. 
Suppose that $R$ satisfies the conditions (a)-(d).
In this section, we further assume that the canonical ring homomorphism $R/\fkm \to \ol{R}/\fkn$ is bijective. 
For an element $x\in \ol{R}$, let 
\[
v(x) = \ell_{\ol{R}}(\ol{R}/x\ol{R})
\]
denote the {\it discrete valuation} of $x$.
Choose $f_1, f_2, \dots, f_r\in C$ such that 
\[
C=\langle f_1, f_2, \dots, f_r\rangle,
\] 
where $r=\rmr(R)$. 
Set $n_i=v(f_i)$ for $1\le i \le r$. We may assume that $n_1 \le n_2 \le \cdots \le n_r$ after replacing in the appropriate order. 

Assume that $n_i=n_j$ for some $1\le i < j \le r$. Then we have $f_i \ol{R}=f_j \ol{R}=\fkn^{n_i}$. Hence $f_j=f_i x$ for some $x\in \ol{R}\setminus \fkn$. Since we have the isomorphism $\varphi: R/\fkm \to \ol{R}/\fkn$, there exists $y\in R\setminus \fkm$ such that $x-y\in \fkn$. Hence 
\[
f_j=(x-y+y)f_i=(x-y)f_i+yf_i, \quad \text{i.e., $f_j-yf_i=(x-y)f_i$}.
\]
Therefore, by replacing $f_j$ with $f_j-yf_i$ among the generators of $C$, we may assume that $n_i<n_j$. It follows that we can choose a minimal system of generators $f_1, f_2, \dots, f_r$ of $C$ such that 
\begin{align} \label{eq1}
n_1 < n_2 < \cdots < n_r.
\end{align}

With this notation introduced we have the following.

\begin{thm} \label{b5.1}
Consider the following assertions:
\begin{enumerate}[{\rm (i)}] 
\item $R$ is a far-flung Gorenstein ring.
\item There exists a fractional canonical module $C=\left<f_1, f_2, \dots, f_r\right>$, where $n_i=v(f_i)$ satisfies the inequalities {\rm (\ref{eq1})}, such that the sum-set $\{n_i+n_j : 1\le i\le j \le r\}$ contains the integers $0, 1, \dots, \rme(R)-1$.
\end{enumerate} 
Then {\rm (ii) $\Rightarrow$ (i)} holds. The implication {\rm (i) $\Rightarrow$ (ii)} also holds if $R$ is a numerical semigroup ring $K[|H|]$. When the condition {\rm (ii)} holds, then $n_1=0$ and $n_2=1$.
\end{thm}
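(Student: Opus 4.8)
The plan is to run everything through the discrete valuation $v$ on $\ol R$ and to reduce the far-flung Gorenstein property to the equality $C^2=\ol R$ supplied by Theorem \ref{0.8}. Throughout I will use the standing residue-field hypothesis $R/\fkm\cong\ol R/\fkn$, which guarantees that for fractional ideals $M\subseteq N$ of finite colength one has $\ell_R(N/M)=|v(N)\setminus v(M)|$; in particular, a fractional ideal $M\subseteq\ol R$ of finite colength equals $\ol R$ precisely when $v(M)=\mathbb{N}_0$. Two auxiliary facts will be recorded at the outset. First, writing $e=\rme(R)$, one has $e=v(a)$: indeed $\rme(R)=\mu_R(\ol R)=\ell_R(\ol R/a\ol R)=\ell_{\ol R}(\ol R/a\ol R)=v(a)$ by Corollary \ref{0.11} and Remark \ref{a2.1}(iii). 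Second, since $f_if_j\in C^2$, the sum-set $S=\{n_i+n_j\colon 1\le i\le j\le r\}$ satisfies $S\subseteq v(C^2)$, where $r=\rmr(R)$.

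For (ii) $\Rightarrow$ (i), assume $\{0,1,\dots,e-1\}\subseteq S$. Then $\{0,\dots,e-1\}\subseteq v(C^2)\subseteq\mathbb{N}_0$, and I propagate these low valuations to all of $\mathbb{N}_0$ using that $a^q\in R$ and that $C^2$ is an $R$-module: given $m\ge e$, write $m=qe+s$ with $q\ge 1$ and $0\le s\le e-1$, choose $z\in C^2$ with $v(z)=s$, and note that $a^q z\in C^2$ has valuation $qe+s=m$. Hence $v(C^2)=\mathbb{N}_0$, so $C^2=\ol R$ and $R$ is far-flung Gorenstein by Theorem \ref{0.8}. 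The same bookkeeping settles the final clause: if (ii) holds then $0\in S$ forces $n_i+n_j=0$ with $n_i,n_j\ge 0$, hence $n_1=0$; and when $r\ge 2$ (so $R$ is not a discrete valuation ring and $e\ge 2$) we also have $1\in S$, and together with $n_1=0$ and the strict inequalities (\ref{eq1}) the only solution of $n_i+n_j=1$ is $n_2=1$.

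For the converse under the numerical semigroup hypothesis, I would first invoke that for $R=K[|H|]$ the canonical module admits a monomial representative (as recalled in Section \ref{section6}), so one may take $C=\langle t^{n_1},\dots,t^{n_r}\rangle$ with $R\subseteq C\subseteq\ol R$ and $v$ the $t$-adic valuation, the valuations $n_i$ then automatically obeying (\ref{eq1}). Because all generators are monomials, no cancellation occurs and $v(C^2)=\bigcup_{i\le j}(n_i+n_j+H)=S+H$, where $H=v(R)$ and $e=\rme(R)=\min(H\setminus\{0\})$. By Theorem \ref{0.8}, being far-flung Gorenstein means $C^2=\ol R$, i.e.\ $S+H=\mathbb{N}_0$. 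Now fix $k\in\{0,1,\dots,e-1\}$ and write $k=s+h$ with $s\in S$ and $h\in H$; since $s\ge 0$ we get $h\le k<e=\min(H\setminus\{0\})$, so $h=0$ and $k=s\in S$. Thus $\{0,\dots,e-1\}\subseteq S$, which is (ii).

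The routine inputs here (the length/value-set count, the identity $\rme(R)=v(a)$, and the monomiality of $C$) are standard, so the genuine content is concentrated in two short combinatorial moves: the multiply-by-$a^q$ trick that spreads the valuations $0,\dots,e-1$ across all of $\mathbb{N}_0$ in (ii) $\Rightarrow$ (i), and the observation in (i) $\Rightarrow$ (ii) that every element of $H$ below the multiplicity vanishes, collapsing $k=s+h$ to $k=s$. The main obstacle I anticipate is conceptual rather than computational: the equality $v(C^2)=S+v(R)$ is valid only when the generators of $C$ can be taken to be monomials, and this is exactly why the implication (i) $\Rightarrow$ (ii) is asserted only for numerical semigroup rings—for a general $R$, cancellation among the products $f_if_j$ could enlarge $v(C^2)$ beyond $S+v(R)$ and invalidate the combinatorial step.
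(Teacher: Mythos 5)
Your proposal is correct and follows essentially the same route as the paper's proof: for (ii) $\Rightarrow$ (i) you propagate the valuations $0,\dots,\rme(R)-1$ to all of $\NN_0$ by multiplying with powers of $a$ (using $v(a)=\rme(R)$) and conclude $C^2=\ol{R}$ via the value-set criterion that the paper cites from Herzog--Kunz and Matsuoka, and for (i) $\Rightarrow$ (ii) you use the monomial canonical module of $K[|H|]$ together with the fact that every nonzero element of $H$ is at least $\rme(R)$, so that values below the multiplicity must come from the sum-set itself. The only cosmetic difference is that you spell out the length/value-set count and the $r\geq 2$ edge case explicitly rather than citing them, which does not change the argument.
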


\begin{proof} 
(ii) $\Rightarrow$ (i): Let $m$ be a non-negative integer, and write 
\[
m=s{\cdot}\rme(R) + n
\] 
for some integers $s\ge 0$ and $0\le n < \rme(R)$. Let $f\in C^2$ be an element with $v(f)=n$. 
By using Remark \ref{a2.1}(iii) and the proof of Corollary \ref{0.11}, we have $v(a)=\mu_R(\ol{R})=\rme(R)$, where $(a)$ is a reduction of $\fkm$. Hence $a^s f\in C^2$ is an element with $v(a^s f)=s{\cdot}\rme(R) + n=m$. It follows that $C^2=\ol{R}$ by \cite{HK2} (see also \cite[Proposition 1]{Matsu}).

(i) $\Rightarrow$ (ii): Suppose that $R=K[|H|]$ is a numerical semigroup ring. Then a fractional canonical module of $R$ is given by 
\[
C=\left<t^{{\rm F}(H)-\alpha} : \alpha \in {\rm PF}(H)\right>
\]
where ${\rm PF}(H)=\{x\in  \ZZ \setminus H: x+h\in H \text{ for all } 0\neq h \in H\}$ is the set of pseudo-Frobenius numbers of $H$ and ${\rm F}(H)=\max\{n\in \mathbb{Z} : n\not\in H\}$ denotes the Frobenius number of $H$ (see \cite[Example (2.1.9)]{GW} and Section \ref{section6}). Note that $C$ is a module satisfying the condition (a). Therefore, if $R$ is a far-flung Gorenstein ring, then $C^2=\ol{R}$ by Theorem \ref{0.8}. 

On the other hand, $v(x)\ge \rme(R)$ for all $x\in \fkm$ since 
\begin{align*} 
\ell_{\ol{R}} (\ol{R}/x \ol{R}) \ge \ell_{\ol{R}} (\ol{R}/\fkm \ol{R})=\ell_R (\ol{R}/\fkm \ol{R})= \mu_R(\ol{R})=\rme(R),
\end{align*}
where the second equality follows from the assumption that $R/\fkm \cong \ol{R}/\fkn$, the third equality follows by Remark \ref{a2.1}(iii), the fourth equality follows from the proof of Corollary \ref{0.11}. 

Hence, by noting that $C^2$ is generated by monomials, for each $0\le n <\rme(R)$, an element $f$ with $v(f)=n$ can be chosen as $t^{\rmF(H)-\alpha}{\cdot}t^{\rmF(H)-\beta}$, where $\alpha, \beta\in \mathrm{PF}(H)$. It follows that by setting $f_i=t^{\rmF(H)-\alpha_{r-i}}$, where ${\rm PF}(H)=\{\alpha_1<\alpha_2<\cdots <\alpha_r=\mathrm{F}(H)\}$, we obtain the assertion (ii).

In particular, there exist $1\le i \le j \le r$ such that $v(f_i{\cdot}f_j)=1$, i.e., $n_i+n_j=1$. It follows that $n_1=0$ and $n_2=1$.
\end{proof}

The assertion (ii) of Theorem \ref{b5.1} is tightly related to the Rohrbach problem:

\begin{prob} (The Rohrbach problem, \cite{Ro, Slo})
Let $A$ be a set of non-negative integers with $r$ elements. Let $n(A)$ denote the integer such that 
the sum-set 
\[
A+A=\{a+b:a,b\in A\}
\] contains the integers $0, 1, \dots, n(A)-1$ but not $n(A)$. If $0\notin A$ then let $n(A)=-1$.
For $r>0$ the Rohrbach problem asks to find the integer 
\[
\overline{n}(r)=\max\{ n(A):  |A|=r\}. 
\] 
\end{prob}

With this notation we have the following result, which improves Corollary \ref{0.11}.

\begin{cor} \label{cor5.3}
Suppose that $R$ is a numerical semigroup ring. If $R$ is a far-flung Gorenstein ring, then the inequality
\[
\rme(R) \le \ol{n}(r)
\] 
holds, where $r=\rmr(R)$.
\end{cor}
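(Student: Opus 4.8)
The plan is to apply Theorem \ref{b5.1} and then simply unwind the definition of the Rohrbach number $\overline{n}(r)$. Since $R$ is a far-flung Gorenstein numerical semigroup ring, the implication (i) $\Rightarrow$ (ii) of Theorem \ref{b5.1} furnishes a fractional canonical module $C=\langle f_1,\dots,f_r\rangle$ whose valuations $n_i=v(f_i)$ satisfy the strict chain $n_1<n_2<\cdots<n_r$ and whose sum-set $\{n_i+n_j : 1\le i\le j\le r\}$ contains the initial segment $0,1,\dots,\rme(R)-1$. This is the only input needed; everything that follows is bookkeeping.

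First I would set $A=\{n_1,n_2,\dots,n_r\}$ and check that $A$ is an admissible competitor for the parameter $r$ in the Rohrbach problem. Because the $f_i$ form a minimal system of generators of $C$, there are exactly $r=\rmr(R)$ of them, and the strict inequalities $n_1<\cdots<n_r$ guarantee that the $n_i$ are pairwise distinct, so $|A|=r$. Moreover, by the final sentence of Theorem \ref{b5.1}, $n_1=0$, so $0\in A$. Hence $A$ is a set of $r$ non-negative integers containing $0$, exactly of the kind over which $\overline{n}(r)$ is maximized.

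Next I would observe that the sum-set appearing in Theorem \ref{b5.1}(ii) is precisely $A+A$: the restriction $i\le j$ rather than $i,j$ arbitrary does not alter the set, since addition is commutative, and the diagonal terms $2n_i$ are included in both descriptions. The containment $\{0,1,\dots,\rme(R)-1\}\subseteq A+A$ then forces $n(A)\ge \rme(R)$, directly from the definition of $n(A)$ as the length of the initial run of consecutive non-negative integers lying in $A+A$. Finally, since $\overline{n}(r)=\max\{n(A') : |A'|=r\}$ and $A$ is one such competitor, I conclude
\[
\rme(R)\le n(A)\le \overline{n}(r),
\]
which is the claimed inequality.

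The argument is essentially a dictionary translation from the module-theoretic statement of Theorem \ref{b5.1} into the combinatorial language of the Rohrbach problem, so I do not expect a genuine obstacle once that theorem is available. The single point deserving a moment of care is confirming that $A$ really has $r$ distinct elements, so that it is a legitimate competitor for $\overline{n}(r)$ and not an artificially smaller set; this is exactly what the strict monotonicity $n_1<\cdots<n_r$ secures.
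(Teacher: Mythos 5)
Your proposal is correct and is exactly the argument the paper intends: the corollary is stated without proof precisely because it is the immediate translation of Theorem \ref{b5.1}(i)$\Rightarrow$(ii) into the language of the Rohrbach problem, taking $A=\{n_1,\dots,n_r\}$ (with $|A|=r$ by strict monotonicity and $0=n_1\in A$) so that $\{0,1,\dots,\rme(R)-1\}\subseteq A+A$ gives $\rme(R)\le n(A)\le \ol{n}(r)$. Your care in checking that $A$ is a legitimate competitor of cardinality $r$ containing $0$ is the right point to verify, and your proof fills in the omitted details faithfully.
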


The solution $\ol{n}(r)$ of the Rohrbach problem is known for $r\le 25$ (see \cite{KoCo, Slo}):

\begin{table}[htb]
  \begin{tabular}{|c||rrrrrrrrrrrrr|} \hline
$r$            & 1 & 2 & 3 & 4 & 5 & 6 & 7 & 8 & 9 & 10 & 11 & 12 & 13\\ 
$\ol{n}(r)$ &1 & 3 & 5 & 9 & 13 & 17 & 21 & 27 & 33 & 41 & 47 & 55 & 65 \\ \hline
$r$            & 14 & 15 & 16 & 17 & 18 & 19 & 20 & 21 & 22 & 23 & 24 & 25 & $\cdots$\\ 
$\ol{n}(r)$ &73 & 81 & 93 & 105 & 117 & 129 & 141 & 153 & 165 & 181 & 197 & 213 & $\cdots$\\ \hline
  \end{tabular}
\end{table}

 The following example ensures that $\ol{n}(r)$ provides the sharp upper bounds of the multiplicity of far-flung Gorenstein numerical semigroup rings, if $r=\rmr(R)\le 5$.

\begin{ex} 
Let $K$ be a field and $K[|t|]$ denote the formal power series ring. Then by using Proposition \ref{b6.1} it is easy to check that the following hold true.
\begin{enumerate}[{\rm (i)}] 
\item Let $R_1=K[|t^5, t^6, t^{13}, t^{14}|]$. Then $R_1$ is a far-flung Gorenstein ring with $\rmr(R_1)=3$ and $\rme(R_1)=5$.
\item Let $R_2=K[|t^9, t^{10}, t^{11}, t^{12}, t^{15}|]$. Then $R_2$ is a far-flung Gorenstein ring with $\rmr(R_2)=4$ and $\rme(R_1)=9$.
\item Let $R_3=K[|t^{13}, t^{14}, t^{15}, t^{16}, t^{17}, t^{18}, t^{21}, t^{23}|]$. Then $R_3$ is a far-flung Gorenstein ring with $\rmr(R_3)=5$ and $\rme(R_1)=13$.
\end{enumerate} 
\end{ex}

\begin{quest}
\begin{enumerate}[{\rm (i)}] 
\item Is it true that for any $r\geq 2$ there exists a far flung Gorenstein numerical semigroup ring $R$ of type $r$ with $\rme(R)=\overline{n}(r)$?
\item For any $r\geq 2$, what is the exact range of values of $\rme(R)$ when $R$ runs over all far-flung Gorenstein numerical semigroup rings of type $r$? 
\end{enumerate}
\end{quest}

\section{far-flung Gorenstein numerical semigroup rings}\label{section6}

In this section we investigate numerical semigroup rings. When a ring is a numerical semigroup ring, we can check the far-flung Gorenstein property by using only the data of the numerical semigroup (Proposition \ref{b6.1}). First, let us recall some basic notation of numerical semigroup rings. 

A numerical semigroup $H$ is a submonoid of $\NN_0$ such that $\NN_0\setminus H$ is finite.
The set 
\[
{\rm PF}(H)=\{x\in  \ZZ \setminus H: x+h\in H \text{ for all } 0\neq h \in H\}
\] 
is called the set of the {\it pseudo-Frobenius numbers} of $H$. 
The largest value in ${\rm PF}( H)$ is called the {\it Frobenius number} of $H$, denoted ${\rm F}(H)$. The smallest nonzero element in $H$ is called the {\it multiplicity} of $H$ and we denote it by ${\rm e}(H)$.

Let $K$ be a field. Then, the pseudo-Frobenius numbers of $H$ define the canonical module of the numerical semigroup ring $R=K[|H|]$. Indeed, it is known that a graded canonical module of $S=K[H]$ is $\omega_S=\sum_{\alpha \in {\rm PF}(H)} S t^{-\alpha}$, see \cite[Example (2.1.9)]{GW}. After multiplication by $t^{{\rm F}(H)}$ we obtain the canonical module
\[
D=\sum_{\alpha \in {\rm PF}(H)}  St^{{\rm F}(H)-\alpha}
\] 
such that $S\subseteq D\subseteq \ol{S}=K[t]$. By noting that $R=K[|H|]$ is the completion of $S_\fkM$, where $\fkM$ is the graded maximal ideal of $S$, we obtain that
\[
C=\sum_{\alpha \in {\rm PF}(H)}  Rt^{{\rm F}(H)-\alpha}
\]
is a module appearing in the condition (a). It is known that $\rme(R)=\rme(H)$. With this notation we have the following.

\begin{Proposition}\label{b6.1}
\label{prop:ffg-h}
Let $R=K[|H|]$ be the semigroup ring for the numerical semigroup $H$. The following statements are equivalent:
\begin{enumerate}[{\rm (i)}] 
\item  $R$ is a far-flung Gorenstein ring.
\item $\{0, \dots, {\rm e}(H)-1\} \subseteq  \{ 2 {\rm{F}(H)}-\alpha-\beta: \alpha, \beta \in {\rm PF}(H)\}$.
\item $\{2{\rm F}(H)- {\rm e}(H)+1,\dots, 2{\rm F}(H)\} \subseteq  \{\alpha+\beta: \alpha, \beta\in {\rm PF}(H)\}$.
\end{enumerate}
\end{Proposition}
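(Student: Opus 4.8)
The plan is to reduce the whole statement to the equivalence ``(i) $\iff C^2 = \ol{R}$'' furnished by Theorem \ref{0.8}, and then to translate the condition $C^2 = \ol{R}$ into the purely numerical statements (ii) and (iii) by exploiting the monomial structure that is available for a semigroup ring.

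First I would recall that for $R = K[|H|]$ one has $\ol{R} = K[|t|]$ and the fractional canonical module
\[
C = \sum_{\alpha \in \mathrm{PF}(H)} R\, t^{\rmF(H)-\alpha},
\]
which is generated by monomials over the monomial ring $R = K[|H|]$. Consequently $C^2$ is again a monomial $R$-submodule of $K[|t|]$; explicitly it is spanned over $K$ by the monomials $t^m$ with $m$ in the exponent set $E := H + S$, where $S := \{2\rmF(H) - \alpha - \beta : \alpha, \beta \in \mathrm{PF}(H)\}$ (here I use $H + H = H$). Since every exponent $2\rmF(H)-\alpha-\beta$ is non-negative, as $\alpha,\beta \le \rmF(H)$, we have $C^2 \subseteq \ol{R}$, and because both modules are spanned by monomials, $C^2 = \ol{R} = K[|t|]$ holds precisely when $E = \NN_0$. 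Thus by Theorem \ref{0.8}, statement (i) is equivalent to $H + S = \NN_0$.

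The core step is then the elementary combinatorial equivalence $H + S = \NN_0 \iff \{0, 1, \dots, \rme(H)-1\} \subseteq S$, which is exactly condition (ii). For the implication $\Leftarrow$ I would mimic the argument in the proof of Theorem \ref{b5.1}: given $m \ge 0$, write $m = s\cdot \rme(H) + n$ with $0 \le n < \rme(H)$; then $n \in S$ by hypothesis and $s\cdot \rme(H) \in H$ since $\rme(H) \in H$, so $m \in H + S$. For the converse $\Rightarrow$, take $n$ with $0 \le n \le \rme(H)-1$ and write $n = h + s$ with $h \in H$, $s \in S$; since $s \ge 0$ we get $h \le n < \rme(H)$, and as $\rme(H)$ is the least positive element of $H$ this forces $h = 0$, whence $n = s \in S$. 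This establishes (i) $\iff$ (ii).

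Finally, (ii) $\iff$ (iii) is a reindexing: the affine bijection $m \mapsto 2\rmF(H) - m$ carries the interval $\{0, \dots, \rme(H)-1\}$ onto $\{2\rmF(H) - \rme(H)+1, \dots, 2\rmF(H)\}$ and carries $S$ onto $\{\alpha + \beta : \alpha, \beta \in \mathrm{PF}(H)\}$, so the containment in (ii) is equivalent to the containment in (iii). The only point requiring genuine care is the first paragraph, namely verifying that the equality $C^2 = \ol{R}$ is detected exactly at the level of exponent sets ($E = \NN_0$); once the monomial bookkeeping is set up, the remaining steps are routine, and I expect no serious obstacle beyond keeping the shift by $2\rmF(H)$ straight.
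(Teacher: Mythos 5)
Your proof is correct and takes essentially the same route as the paper: reduce via Theorem \ref{0.8} to the condition $C^2=\ol{R}$, use the monomial structure of $C^2$ to turn this into the exponent-set statement (ii), and obtain (ii) $\Leftrightarrow$ (iii) by the reindexing $m\mapsto 2{\rm F}(H)-m$. The only difference is one of detail: you spell out the combinatorial equivalence $H+S=\mathbb{N}_0 \iff \{0,\dots,{\rm e}(H)-1\}\subseteq S$ (via division by ${\rm e}(H)$ and the minimality of ${\rm e}(H)$ in $H$), which the paper compresses into the phrases ``since $C^2$ is generated by monomials'' and ``simple algebraic manipulations''.
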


\begin{proof}
According to Theorem \ref{0.8}, $R$ is far-flung Gorenstein if and only if $C^2=K[|t|]$.  
This proves that (i) $\Leftrightarrow$ (ii) since $C^2$ is generated by monomials. 
Simple algebraic manipulations show that (ii) $\Leftrightarrow$ (iii). 
\end{proof}

\begin{Corollary}\label{ffg-minimal}
Let $H$ be a numerical semigroup minimally generated by 
$a_1<\dots< a_v$ which is of minimal multiplicity, i.e. $v=a_1$.  Then $K[|H|]$ is a far-flung Gorenstein ring if and only if 
\begin{equation} \label{inclusion}
\{2 a_v-a_1+1,\dots, 2 a_v\}\subseteq \{a_i+a_j:2\leq i, j\leq v\}.
\end{equation}
\end{Corollary}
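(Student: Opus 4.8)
The plan is to specialize Proposition~\ref{b6.1}(iii) to the minimal multiplicity situation by computing the set $\PF(H)$ of pseudo-Frobenius numbers and the Frobenius number $\rmF(H)$ explicitly in terms of the generators $a_1<\dots<a_v$, and then to recognize condition (iii) as the inclusion~(\ref{inclusion}) after a single index shift. Since Proposition~\ref{b6.1} already gives the equivalence (i)~$\Leftrightarrow$~(iii), this translation settles both implications of the corollary at once. I assume $v\ge 2$; the case $v=1$ (where $H=\NN_0$ and $R$ is a discrete valuation ring) is degenerate.

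First I would determine the Apéry set $\Ap(H,a_1)=\{w\in H: w-a_1\notin H\}$ and show that it equals $\{0,a_2,\dots,a_v\}$. Each $a_i$ with $i\ge 2$ belongs to it: since $a_i>a_1$ and $a_i$ is a minimal generator, $a_i-a_1$ is a positive integer that cannot lie in $H$, for otherwise $a_i=a_1+(a_i-a_1)$ would express $a_i$ as a sum of two nonzero elements of $H$. Together with $0$ this exhibits $v$ distinct elements of $\Ap(H,a_1)$; as $\Ap(H,a_1)$ has exactly $\rme(H)=a_1=v$ elements, equality follows.

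Next I would identify $\PF(H)$ with $\{w-a_1: w\text{ maximal in }\Ap(H,a_1)\}$ for the order $x\le_H y\iff y-x\in H$. Here $0\le_H a_i$ for every $i$, so $0$ is not maximal, while for $2\le i<j\le v$ the difference $a_j-a_i$ is positive and, by minimality of the generator $a_j$, not in $H$; hence $a_2,\dots,a_v$ are pairwise $\le_H$-incomparable and are precisely the maximal elements. This gives $\PF(H)=\{a_i-a_1:2\le i\le v\}$ and in particular $\rmF(H)=a_v-a_1$.

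Finally I would substitute these values into Proposition~\ref{b6.1}(iii). The left-hand range becomes $\{2a_v-3a_1+1,\dots,2a_v-2a_1\}$, and the right-hand set becomes $\{a_i+a_j-2a_1: 2\le i,j\le v\}$; adding $2a_1$ to every element of both sides converts condition (iii) verbatim into the inclusion~(\ref{inclusion}). The only genuine content is the description of $\PF(H)$, which rests on the standard characterization of pseudo-Frobenius numbers as the maximal Apéry elements shifted by $a_1$, together with the incomparability of the minimal generators modulo $a_1$; the remaining steps are a substitution and an index shift, so I expect no real obstacle here.
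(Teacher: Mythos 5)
Your proof is correct and takes essentially the same route as the paper: both reduce the corollary to Proposition~\ref{b6.1}(iii) via the identity $\PF(H)=\{a_i-a_1 : 2\le i\le v\}$ (hence $\rmF(H)=a_v-a_1$ and $\rme(H)=a_1=v$), followed by the shift by $2a_1$. The only difference is that you derive this description of $\PF(H)$ yourself through the Apéry set $\Ap(H,a_1)=\{0,a_2,\dots,a_v\}$, whereas the paper simply cites Rosales--Garc\'{\i}a-S\'anchez for it.
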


\begin{proof}
Since $H$ has minimal multiplicity we have 
${\rm PF}(H)=$ $\{ a_2-a_1, \dots, a_v-a_1\}$ and $a_1=v$
 (see \cite{RS}).  The conclusion now follows from Proposition \ref{prop:ffg-h}(iii).
\end{proof}

This corollary has an immediate application, where we find far-flung numerical semigroup rings with minimal multiplicity   of arbitrary embedding dimension.

\begin{Corollary}\label{ffg-minmult-aseq}
Let $a\geq 3$ and $d$ be coprime nonnegative integers and $H=\langle a, a+d, \dots, a+(a-1)d \rangle$. Then $R=K[|H|]$ is a far-flung Gorenstein ring if and only if $d=1$.
\end{Corollary}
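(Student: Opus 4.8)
The plan is to invoke Corollary~\ref{ffg-minimal}, since $H$ will turn out to have minimal multiplicity. Write $a_i = a + (i-1)d$ for $1 \le i \le a$, so the proposed generators are $a_1 < a_2 < \cdots < a_a$ with $a_1 = a = \rme(H)$. First I would confirm that this list is a \emph{minimal} system of generators, so that $v = a = a_1$ and Corollary~\ref{ffg-minimal} is applicable. Suppose some $a_i$ with $i \ge 2$ were a sum of $m \ge 2$ generators, say $a_i = a_{j_1} + \cdots + a_{j_m}$. Comparing the two sides yields $d\bigl[(i-1) - \sum_{l}(j_l-1)\bigr] = (m-1)a$, and since $\gcd(a,d)=1$ the bracketed integer must be a positive multiple of $a$, forcing $i-1 \ge a$, which is impossible for $i \le a$. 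Hence every $a_i$ is a minimal generator and $H$ has minimal multiplicity. This is precisely where the coprimality hypothesis does its work, and I expect it to be the main point to get right.

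With minimal multiplicity in hand, Corollary~\ref{ffg-minimal} reduces the far-flung Gorenstein property to the inclusion
\[
\{2a_a - a_1 + 1, \dots, 2a_a\} \subseteq \{a_i + a_j : 2 \le i, j \le a\}.
\]
I would then compute both sides explicitly. On the left, $2a_a - a_1 + 1 = a + 2(a-1)d + 1$ and $2a_a = 2a + 2(a-1)d$, so the target is an interval $I$ of exactly $a$ consecutive integers. On the right, $a_i + a_j = 2a + (i+j-2)d$, so the sum-set is $S = \{2a + md : 2 \le m \le 2a-2\}$; the key observation is that every element of $S$ is congruent to $2a$ modulo $d$.

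Finally I would split on the value of $d$. If $d = 1$, then $S = \{2a+2, \dots, 4a-2\}$ and $I = \{3a-1, \dots, 4a-2\}$, and the inclusion $I \subseteq S$ holds precisely because $3a-1 \ge 2a+2$, i.e. $a \ge 3$; thus $R$ is far-flung Gorenstein. If instead $d \ge 2$, then $I$ consists of $a \ge 3$ consecutive integers and therefore meets at least two residue classes modulo $d$, whereas $S$ lies entirely in the single class of $2a$ modulo $d$; hence $I \not\subseteq S$ and $R$ is not far-flung Gorenstein. Combining the two cases gives the stated equivalence. The only genuinely delicate step is the minimal-multiplicity verification; once Corollary~\ref{ffg-minimal} is in force, the residue-class observation makes the dichotomy between $d=1$ and $d\ge 2$ transparent.
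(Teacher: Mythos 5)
Your proof is correct and takes essentially the same route as the paper: both invoke Corollary~\ref{ffg-minimal}, note that every sum of two generators larger than $a$ is of the form $2a+jd$ (hence lies in a single residue class modulo $d$, so the interval of $a\geq 3$ consecutive integers cannot be covered unless $d=1$), and verify for $d=1$ that $\{3a-1,\dots,4a-2\}\subseteq\{2a+2,\dots,4a-2\}$. The only difference is that you explicitly check that the listed generators are minimal, so that $H$ has minimal multiplicity and Corollary~\ref{ffg-minimal} applies --- a hypothesis the paper's proof uses without verification.
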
 
\begin{proof}
Assume that $R$ is far-flung Gorenstein. Then the sum of any two generators of $H$ larger then $a$ is of the form $2a+ jd$, hence we can not generate with them all the integers in any interval of length $a$ unless $d=1$. Conversely, when $d=1$  we verify that \eqref{inclusion} holds: on the right hand side  we have the set $\{2a+2, \dots, 4a-2\}$, which clearly contains the set $\{3a-1,\dots, 4a-2\}$. Thus, $R$ is a far-flung Gorenstein ring. 
\end{proof}

We shall now describe the far-flung Gorenstein numerical semigroup rings of type $3$, not of minimal multiplicity. Recall that the classification of the far-flung Gorenstein numerical semigroup rings of type $2$ is given in Example \ref{b2.8}. 

\begin{Theorem}
Let $R=K[|H|]$ be a numerical semigroup ring. Suppose that $R$ is of type 3 and not of minimal multiplicity. Then the following conditions are equivalent:
\begin{enumerate}[{\rm (i)}] 
\item $R$ is a far-flung Gorenstein ring.
\item \begin{enumerate}
\item[{\rm (1-1)}] $H=\langle 5, 5m+4, 10m+6, 10m+7 \rangle$,  where $m\geq 1$;
\item[{\rm (1-2)}] $H=\langle 5, 5m+1, 10m+3, 10m+4 \rangle$,  where $m\geq 1$; 
\item[{\rm (2-1)}] $H=\langle 5, 5m+2, 10m+1, 10m+3 \rangle$,  where $m\geq 1$;
\item[{\rm (2-2)}] $H=\langle 5, 5m+3, 10m+4, 10m+7\rangle$,  where $m\geq 1$.
\end{enumerate}
\end{enumerate}
\end{Theorem}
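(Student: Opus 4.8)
The plan is to pin down the multiplicity first, then convert the far-flung Gorenstein property into an arithmetic condition on the pseudo-Frobenius numbers, and finally recover the generators from the Apéry set $\mathrm{Ap}(H,5)$. Since $R=K[|H|]$ satisfies conditions (a)--(d) by Remark \ref{a2.1}(ii), Corollary \ref{0.11} gives $4=\rmr(R)+1\le \rme(R)$ and Corollary \ref{cor5.3} gives $\rme(R)\le \overline{n}(3)=5$; as $R$ is assumed not to have minimal multiplicity, Fact \ref{b2.5} excludes $\rme(R)=4$, so $\rme(H)=5$. Thus $5$ is the least positive element of $H$ and $\mathrm{Ap}(H,5)$ has exactly one element in each residue class modulo $5$.

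\emph{The deficit condition.} Writing the three pseudo-Frobenius numbers as $\mathrm{PF}(H)=\{F-b,\ F-a,\ F\}$ with $F=\mathrm{F}(H)$ and $0<a<b$, their six pairwise sums equal $2F$ minus the elements of $\{0,a,2a,b,a+b,2b\}$. By Proposition \ref{b6.1}(iii), $R$ is far-flung Gorenstein if and only if
\[
\{0,1,2,3,4\}\subseteq \{0,a,2a,b,a+b,2b\}.
\]
Since $a$ is the least nonzero member of the right-hand set, $a=1$; then $\{3,4\}$ must meet $\{b,b+1,2b\}$, which forces $b\in\{2,3\}$. Hence, under our standing hypotheses, $R$ is far-flung Gorenstein exactly when $\mathrm{PF}(H)=\{F-2,F-1,F\}$ or $\mathrm{PF}(H)=\{F-3,F-1,F\}$.

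\emph{From pseudo-Frobenius numbers to generators.} For the forward implication I would use that the $\le_H$-maximal elements of $\mathrm{Ap}(H,5)$ are $\{\alpha+5:\alpha\in\mathrm{PF}(H)\}$ (\cite{RS}); three of the five Apéry elements are maximal, and the other two are $0$ and a single nonzero element $u$. The crux is to locate $u$: since $u$ is non-maximal we have $u\le_H w_j$ for some maximal $w_j$, and combining $w_j-u\in H$ with the Apéry inequalities $w_i+w_{i'}\ge w_{(i+i')\bmod 5}$ shows that $w_j-u$ is itself an Apéry element $w_k$. As the three maximal elements all exceed $F+1$ while $w_j\le F+5$, taking $w_k$ maximal would give $u\le 3<5$, contradicting $\rme(H)=5$; and $w_k=0$ is impossible. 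Therefore $w_k=u$, that is, $2u=w_j$ is one of the three maximal Apéry elements. Tracking residues modulo $5$ then finishes the argument: if $\mathrm{PF}(H)=\{F-2,F-1,F\}$ the maximal elements are $\{F+3,F+4,F+5\}$, which occupy three consecutive residues avoiding $0$, forcing $F\equiv 3$ or $4\pmod 5$; imposing $2u=w_j$ with $w_j$ the maximal element in the residue class of $2u$ then forces $F$ odd and yields $(F,u)=(10m+3,5m+4)$ or $(10m-1,5m+1)$, i.e. families (1-1) and (1-2). The case $\mathrm{PF}(H)=\{F-3,F-1,F\}$, with maximal elements $\{F+2,F+4,F+5\}$, gives $F\equiv 2$ or $4\pmod 5$ and yields $(F,u)=(10m+2,5m+3)$ or $(10m-1,5m+2)$, i.e. families (2-2) and (2-1). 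In every case the third maximal element is $2u$, hence not a minimal generator, leaving exactly $5$, $u$, and the two genuinely maximal Apéry elements as the minimal generators; the bound $m\ge 1$ is precisely what keeps $5$ the multiplicity.

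\emph{The converse and the main difficulty.} For (ii) $\Rightarrow$ (i) I would verify each family directly: one computes $\mathrm{Ap}(H,5)=\{0,u,g_1,g_2,2u\}$, where $g_1,g_2$ are the other two minimal generators, reads off $\mathrm{PF}(H)$, checks $\rme(H)=5$ and $\rmr(R)=3$ with $R$ not of minimal multiplicity, and confirms the inclusion of Proposition \ref{b6.1}(iii): the deficit set $\{0,a,2a,b,a+b,2b\}$ equals $\{0,1,2,3,4\}$ for families (1-1),(1-2) and $\{0,1,2,3,4,6\}$ for families (2-1),(2-2), both containing $\{0,1,2,3,4\}$. I expect the main obstacle to be the forward Apéry analysis, namely rigorously excluding the wrong maximal partner for $u$ and confirming that no second non-maximal element occurs (so that the type is exactly $3$); this is exactly where the minimality of each $w_i$ in its residue class and the inequalities $w_i+w_{i'}\ge w_{(i+i')\bmod 5}$ do the real work.
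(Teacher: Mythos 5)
Your proposal is correct, and its skeleton coincides with the paper's: you pin down $\rme(R)=5$ via Corollaries \ref{0.11} and \ref{cor5.3} together with Fact \ref{b2.5}, you reduce the far-flung Gorenstein property to the two shapes $\mathrm{PF}(H)=\{F-2,F-1,F\}$ or $\{F-3,F-1,F\}$ (the paper phrases this same dichotomy as $C=\langle 1,t,t^2\rangle$ or $\langle 1,t,t^3\rangle$), and your converse is the same routine check through Proposition \ref{b6.1}(iii). Where you genuinely diverge is the recovery of the generators of $H$. The paper runs four parallel case analyses: in each one it writes $H$ using the smallest element of every residue class modulo $5$, notes that a specific gap such as $5m-1$ lies outside $\mathrm{PF}(H)$ and hence is dominated under $\leq_H$ by some pseudo-Frobenius number, extracts a one-sided inequality like $2m\le n$, and then obtains the reverse inequality from the fact that a suitable pseudo-Frobenius number is not in $H$, forcing $n=2m$ or $n=2m-1$. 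You replace all four computations by a single structural lemma: the unique nonzero non-maximal element $u$ of $\mathrm{Ap}(H,5)$ satisfies $2u\in\mathrm{Ap}(H,5)$ with $2u$ maximal, because $w_j-u$ is again an Apéry element (a summand of an Apéry element is one), it cannot be $0$, and it cannot be maximal since the maximal elements all lie in $[F+2,F+5]$ while $u\ge 5$; after that, congruence bookkeeping modulo $5$ and a parity constraint produce the four families at once, with the restriction $m\ge 1$ appearing transparently as the requirement that $u$ exceed the multiplicity. Your route buys uniformity—one lemma instead of four repetitions of the same pattern, and the families drop out of pure residue arithmetic—at the cost of setting up the Apéry-set formalism; the paper's route needs no such preparation and stays inside elementary membership tests, but is longer and more repetitive. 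One wording slip worth fixing: in your deficit-condition step, ``$\{3,4\}$ must \emph{meet} $\{b,b+1,2b\}$'' should read ``must be \emph{contained in},'' since mere intersection would not exclude $b=4$; your stated conclusion $b\in\{2,3\}$ is nevertheless the correct one.
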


\begin{proof}
(i) $\Rightarrow$ (ii): By Corollaries \ref{0.11} and \ref{cor5.3}, we have $\rme(R)=5$. Hence $\embdim R=4$ because  $\embdim R<\rme(R)$ and $\embdim R>3$ by \cite{H}. Furthermore, $C =\sum_{\alpha\in \mathrm{PF}(H)} Rt^{\mathrm{F}(H)-\alpha}$ is generated by either $1, t, t^2$ or $1, t, t^3$.

Assume that $C= \langle 1, t, t^2 \rangle$. Then, since $\rme(R)=5$, we have either the case  
\begin{align*}
\text{{\rm (1-1):}}& \quad \mathrm{PF}(H)=\{ 5n+1, 5n+2, 5n+3\} \quad \text{or}\\
\text{{\rm (1-2):}}& \quad \mathrm{PF}(H)=\{ 5n+2, 5n+3, 5n+4\}
\end{align*}
for some $n\ge 0$. 

{\rm (1-1):} Considering the smallest nonzero element in $H$ in each congruence class modulo $5$,  there exists $m>0$ so that $H=\langle 5, 5n+6, 5n+7, 5n+8, 5m+4 \rangle$  and $5m+4-5=5m-1\not\in H$. 
The pseudo-Frobenius numbers of $H$ are also the elements in $\ZZ \setminus H$ which are maximal with respect to the partial order $\leq_H$ induced by $H$, where $a\leq_H b$ if and only if $b-a \in H$. 
Since $5m-1\not\in \mathrm{PF}(H)$, we get  $5m-1 \leq_H 5n+1$, or $5m-1\leq_H 5n+2$, or $5m-1\leq_H 5n+3$, i.e. 
\begin{align*}
\begin{cases}
5n+3-(5m-1)\in H \quad \text{or} \\
5n+2-(5m-1)\in H \quad \text{or} \\
5n+1-(5m-1)\in H. 
\end{cases}
\end{align*}
Considering the smallest elements in $H$  modulo 5, it is equivalent to saying that
\begin{align*}
\begin{cases}
5n+3-(5m-1)\ge 5m+4 \quad \text{or} \\
5n+2-(5m-1)\ge 5n+8 \quad \text{or} \\
5n+1-(5m-1)\ge 5n+7.
\end{cases}
\end{align*}
  Of these, only $5n+3-(5m-1)\ge 5m+4$ can happen. Hence, we get $2m\le n$.  Thus $5n+8= 2(5m+4)+5(n-2m)$, which gives 
\[
H= \langle 5,  5m+4 , 5n+6, 5n+7\rangle.
\]

The fact that $5n+3 \notin H$ is equivalent to $5n+3\notin \langle 5, 5m+4\rangle$. On the other hand, for any positive integer $p$ one has $5p+3\notin \langle 5, 5m+4\rangle$ if and only if there exist $u,v$ nonnegative integers such that $5p+3=5u+(5m+4)v$. Arguing modulo $5$, the previous equation has nonnegative integer solutions if and only if $5p+3\geq 2(5m+4)$, equivalently $p\geq 2m+1$.
Therefore, $n\leq 2m$, which gives that $n=2m$ and 
$$
H=\langle 5, 5m+4, 10m+6, 10m+7 \rangle, \text{ for }m\geq 1.
$$
Conversely, it is routine to check that such a semigroup $H$ is minimally generated by these four numbers and that ${\rm PF}(H)\supseteq\{10m+1, 10m+2, 10m+3\}$. Since $H$ is not of minimal multiplicity, the previous inclusion is an equality. This completes the analysis of the case (1-1).

The rest of the proof proceeds in a similar  way.

{\rm (1-2):} Arguing modulo $5$ we find $m>0$ so that $H=\langle 5, 5m+1, 5n+7, 5n+8, 5n+9 \rangle$ and $5m+1-5\not\in H$. Since $5m-4\not\in \mathrm{PF}(H)$, we get that
\begin{align*}
\begin{cases}
5n+2-(5m-4)\in H \quad \text{or} \\
5n+3-(5m-4)\in H \quad \text{or} \\
5n+4-(5m-4)\in H. 
\end{cases}
\end{align*}
Considering the smallest  element in $H$ in the same congruence class modulol $5$, it is equivalent to saying that 
\begin{align*}
\begin{cases}
5n+2-(5m-4)\ge 5m+1 \quad \text{or} \\
5n+3-(5m-4)\ge 5n+7 \quad \text{or} \\
5n+4-(5m-4)\ge 5n+8
\end{cases}
\end{align*}
 Of these three inequalities, only  the first can happen. Hence, we get $2m-1\le n$. Since $5n+7= 2(5m+1)+ 5(n-2m+1)$ we obtain that  
\[
H= \langle 5, 5m+1, 5n+8, 5n+9 \rangle.
\]

Clearly, $5n+2 \notin H$ if and only if $5n+2\notin \langle 5, 5m+1\rangle$. On the other hand, for any integer $p$, $5p+2 \in \langle 5, 5m+1\rangle$ if and only if $5p+2=5u+(5m+1)v$ for some nonnegative integers $u,v$, equivalently, $5p+2 \geq  2(5m+2)$, i.e. $p\geq 2m$. Thus $n\leq 2m-1$, which forces $n=2m-1$ and 
$$
H=\langle 5, 5m+1,  10m+3, 10m+4\rangle \text{ with } m \geq 1.
$$ 
Conversely, when $H$ is of this form, it is easy to check that ${\rm PF}(H)=\{10m-3, 10m-2, 10m-1\}$, hence $R$ is far-flung Gorenstein.

Next let us assume that $C=\langle 1, t, t^3 \rangle$. Then, since $\rme(R)=5$, we have  either
\begin{align*}
\text{{\rm (2-1):}}& \quad \mathrm{PF}(H)=\{ 5n+1, 5n+3, 5n+4\} \quad \text{or}\\
\text{{\rm (2-2):}}& \quad \mathrm{PF}(H)=\{ 5n+4, 5n+6, 5n+7\}
\end{align*}
for $n\ge 0$. 

{\rm (2-1):} In this case we can choose $m>0$ so that $H=\langle 5, 5n+6, 5m+2, 5n+8, 5n+9 \rangle$ and $5m+2-5\not\in H$. Since $5m-3\not\in \mathrm{PF}(H)$, we get 
\begin{align*}
\begin{cases}
5n+1-(5m-3)\in H \quad \text{or} \\
5n+3-(5m-3)\in H \quad \text{or} \\
5n+4-(5m-3)\in H. 
\end{cases}
\end{align*}
Considering the smallest element in $H$  with the same residue modulo $5$, it is equivalent to saying that 
\begin{align*}
\begin{cases}
5(n-m)+4\ge 5n+9 \quad \text{or} \\
5(n-m)+6\ge 5n+6 \quad \text{or} \\
5(n-m)+7\ge 5m+2
\end{cases}
\end{align*}
 Of these, only the latter inequality  can happen.  Hence, we get $2m-1\le n$.  Then $5n+9=5(n-2m+1)+2(5m+2)$ and  
\[
H=\langle 5, 5m+2, 5n+6,  5n+8\rangle.
\]

The fact that $5n+4\notin H$ is equivalent to $5n+4\notin \langle 5, 5m+2\rangle$. For any integer $p$, $5p+4 \in \langle 5, 5m+2\rangle$ if and only if $5p+4 \geq 2(5m+2)$, i.e. $p\geq 2m$. We derive $n\leq 2m-1$, hence $n=2m-1$ and 
$$
H=\langle 5, 5m+2, 10m+1, 10m+3\rangle, \text{ where } m\geq 1.
$$
It is routine to check that such a semigroup has the   pseudo-Frobenius numbers as in (2-1).

{\rm (2-2):} In this case we find  $m>0$ so that  $H=\langle 5, 5n+11, 5n+12, 5m+3, 5n+9 \rangle$ and $(5m+3)-5\not\in H$. Since $5m-2\not\in \mathrm{PF}(H)$, we get 
\begin{align*}
\begin{cases}
5n+4-(5m-2)\in H \quad \text{or} \\
5n+6-(5m-2)\in H \quad \text{or} \\
5n+7-(5m-2)\in H. 
\end{cases}
\end{align*}
Arguing as before, it is equivalent to saying that 
\begin{align*}
\begin{cases}
5(n-m)+6\ge 5n+11 \quad \text{or} \\
5(n-m)+8\ge 5m+3 \quad \text{or} \\
5(n-m)+9\ge 5n+9.
\end{cases}
\end{align*}
Of these three inequalities, only  the middle one can occur, hence  $2m-1\le n$. We write $5n+11=2(5m+3)+5(n-2m+1)$ and thus 
\[
H= \langle 5,5m+3,  5n+9, 5n+12 \rangle.
\]
Since $5n+6\notin H$, arguing as before we see that $n\leq 2m-1$, and in fact $n=2m-1$. Hence
$$
H=\langle 5, 5m+3, 10m+4, 10m+7\rangle, \text{ where } m\geq 1.
$$
Conversely, it is easy to check that $10m-1, 10m+1, 10m+2$ are indeed the pseudo-Frobenius numbers of  such an $H$.
\end{proof}



\end{document}